\numberwithin{equation}{section}% makes equat numb contain the section
\newtheorem{Thm}[equation]{Theorem}
\newtheorem*{Thm*}{Theorem}
\newtheorem{Prop}[equation]{Proposition}
\newtheorem{Lem}[equation]{Lemma}
\newtheorem{Cor}[equation]{Corollary}
\theoremstyle{remark}
\newtheorem{Def}[equation]{Definition}
\newtheorem{Exa}[equation]{Example}
\newtheorem{Hyp}[equation]{Hypotheses}
\newtheorem{Rem}[equation]{Remark}
\newcommand{\nc}{\newcommand}
\nc{\dmo}{\DeclareMathOperator}
\dmo{\Ab}{Ab}
\dmo{\Der}{D}% ground notation for derived categories
\dmo{\Ext}{Ext}
\dmo{\Hom}{Hom}
\dmo{\Id}{Id}
\dmo{\Ker}{Ker}
\dmo{\Loc}{Loc}
\dmo{\Mod}{Mod}% sheaves of modules
\dmo{\SH}{SH}% ground name for cat of spectra
\dmo{\Spc}{Spc}
\dmo{\Tor}{Tor}
\dmo{\coker}{coker}
\dmo{\hocolim}{hocolim}
\dmo{\img}{im}
\dmo{\incl}{incl}
\dmo{\modname}{mod}%
\dmo{\opname}{op}
\dmo{\yonedaname}{h}
\nc{\Greg}[1]{{\color{CarnationPink}#1}}% from latin 'carnatio'..., very 'Reservoir Dog'
\nc{\Henning}[1]{{\color{Blue}#1}}
\nc{\Homcat}[1]{\Hom_{\cat #1}}
\nc{\MMod}{\Mod\text{-}}%
\nc{\MT}{\MMod\cat{T}^{c}}
\nc{\Paul}[1]{{\color{Violet}#1}}
\nc{\Pout}[1]{{\color{red}\sout{#1}}}
\nc{\SET}[2]{\big\{\,#1\,\big|\,#2\,\big\}}
\nc{\Sout}[1]{\Paul{\sout{#1}}}
\nc{\adjto}{\rightleftarrows}
\nc{\adj}{\dashv}
\nc{\aka}{{a.\,k.\,a.}\ }
\nc{\bbN}{\mathbb{N}}
\nc{\bbP}{\mathbb{P}}
\nc{\bbZ}{\mathbb{Z}}
\nc{\bbe}{\mathbb{e}}% left tt-idempotents
\nc{\bbf}{\mathbb{f}}% right tt-idempotents
\nc{\bigjoin}{\bigvee}
\nc{\bigmeet}{\bigwedge}
\nc{\calI}{\mathcal{I}}
\nc{\calJ}{\mathcal{J}}
\nc{\cat}[1]{\mathscr{#1}}%or: \nc{\cat}[1]{\mathcal{#1}}
\nc{\colim}{\mathop{\mathrm{colim}}}
\nc{\eg}{{\sl e.g.}}
\nc{\eps}{\epsilon}
\nc{\equalby}[1]{\overset{\textrm{#1}}=}
\nc{\fp}{^{\textrm{\rm fp}}}
\nc{\hook}{\hookrightarrow}
\nc{\ie}{{\sl i.e.}\ }
\nc{\ihom}{{\mathsf{hom}}} %{{\underline{\hom}}}
\nc{\into}{\mathop{\rightarrowtail}}
\nc{\inv}{^{-1}}
\nc{\isoto}{\overset{\sim}{\,\to\,}}
\nc{\join}{\vee}
\nc{\loccit}{{\sl loc.\ cit.}}
\nc{\mT}{\mmod\cat T^c}
\nc{\meet}{\wedge}
\nc{\mmod}{\modname\text{-}}%
\nc{\mutmut}{{\sl mutatis mutandis}}
\nc{\ointo}[1]{\,\overset{#1}\into\,}
\nc{\onto}{\mathop{\twoheadrightarrow}}
\nc{\oonto}[1]{\,\overset{#1}\onto\,}
\nc{\op}{^{\opname}}
\nc{\otoo}[1]{\overset{#1}{\,\too\,}}
\nc{\oto}[1]{\overset{#1}\to}
\nc{\oursetminus}{\!\smallsetminus\!}
\nc{\pperp}{{\perp\kern-.5em\perp}}% double-perp for Hom-Ext-perp
\nc{\qquadtext}[1]{\qquad\textrm{#1}\qquad}
\nc{\quadtext}[1]{\quad\textrm{#1}\quad}
\nc{\restr}[1]{_{|_{\scriptstyle #1}}}% to restrict a map
\nc{\smat}[1]{\left(\begin{smallmatrix} #1 \end{smallmatrix}\right)}% small matrice
\nc{\too}{\mathop{\longrightarrow}\limits}
\nc{\unit}{\mathbb{1}}% unit for \otimes
\nc{\yoneda}{\yonedaname}% other options: \mathsf{y}% \mathbf{y} % see \dmo \yonedaname
\begin{document}

%------------------------------------------------------------------------------

\title{The frame of smashing tensor-ideals}
\author{Paul Balmer}
\author{Henning Krause}
\author{Greg Stevenson}
\date{\today}

\address{Paul Balmer, Mathematics Department, UCLA, Los Angeles, CA 90095-1555, USA}
\email{balmer@math.ucla.edu}
\urladdr{http://www.math.ucla.edu/$\sim$balmer}

\address{Henning Krause, Universit\"at Bielefeld, Fakult\"at f\"ur Mathematik,
Postfach 10\,01\,31, 33501 Bielefeld, Germany}
\email{hkrause@math.uni-bielefeld.de}
\urladdr{http://www.math.uni-bielefeld.de/$\sim$hkrause/}

\address{Greg Stevenson, School of Mathematics and Statistics,
University of Glasgow,
University Place,
Glasgow G12 8QQ}
\email{gregory.stevenson@glasgow.ac.uk}
\urladdr{http://www.maths.gla.ac.uk/$\sim$gstevenson/}

\begin{abstract}
We prove that every flat tensor-idempotent in the module category $\MT$ of a tensor-triangulated category~$\cat T$ comes from a unique smashing ideal in~$\cat T$. We deduce that the lattice of smashing ideals forms a frame.
\end{abstract}

\subjclass[2010]{18E30 (55U35)}
\keywords{Smashing ideal, flat idempotent, frame, tt-geometry}

\thanks{P.\,Balmer supported by Humboldt Research Award and NSF grant~DMS-1600032.}

\maketitle

%------------------------------------------------------------------------------
\begin{center}
\textit{Dedicated to Amnon Neeman, on the occasion of his 60${}^\textrm{th}$ birthday.}
\end{center}
%------------------------------------------------------------------------------
\bigbreak
\tableofcontents

%------------------------------------------------------------------------------

\section{Introduction}

%------------------------------------------------------------------------------

The ultimate goal of this paper is to prove Theorem~\ref{thm:frame}, which says:
\begin{Thm}
\label{thm:frame-intro}%
Let $\cat T$ be a tensor-triangulated category, assumed to be rigidly-compactly generated. Then the lattice of smashing $\otimes$-ideals of~$\cat T$ is a \emph{frame}, \ie it is complete (all meets and joins exist) and the meet distributes over arbitrary joins.
\end{Thm}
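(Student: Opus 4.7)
My plan is to leverage the paper's main technical result, which establishes a bijection between smashing $\otimes$-ideals of $\cat T$ and flat tensor-idempotents in the module category $\MT$. Interpreting this correspondence as an isomorphism of posets --- ordering smashing ideals by inclusion and flat tensor-idempotents by declaring $e \leq f$ when $e \otimes f \cong e$ --- it will suffice to verify the frame axioms on the side of $\MT$, where the Grothendieck abelian structure and the closed symmetric monoidal product supply the tools.

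The first step is to identify the binary meet of two flat tensor-idempotents $e, f$ with the tensor product $e \otimes f$: idempotence follows from $(e \otimes f)^{\otimes 2} \cong e^{\otimes 2} \otimes f^{\otimes 2} \cong e \otimes f$ by symmetry, flatness of $e \otimes f$ is inherited, and the universal property of the meet for the ordering above is immediate. Arbitrary joins exist formally, since they can be transported across the bijection from the known complete lattice of smashing ideals; however, to prove distributivity I will want a more explicit description, for instance by presenting $\bigjoin_i e_i$ as a retract of a filtered colimit of the finite tensor products $e_{i_1} \otimes \cdots \otimes e_{i_n}$ inside $\MT$.

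The heart of the argument is the infinite distributive law $e \otimes \bigjoin_i f_i \cong \bigjoin_i (e \otimes f_i)$. Here the closed monoidal structure does most of the work: the endofunctor $e \otimes (-)$ on $\MT$ is a left adjoint (with right adjoint the internal hom), hence preserves all colimits, so any colimit presentation of the join on the left is carried term-by-term to a colimit presentation of the join on the right. Provided the resulting object lies again in the class of flat tensor-idempotents --- which hinges on flatness being preserved under the colimits used --- distributivity follows.

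The principal obstacle I anticipate is not distributivity itself, but pinning down the correct construction of infinite joins in the sublattice of flat tensor-idempotents, and in particular ensuring that this sublattice is closed under the relevant colimit operations. A cleaner alternative, which I would pursue in parallel, is to bypass explicit colimits and instead exhibit the inclusion of flat tensor-idempotents into some larger ambient lattice as a reflection that preserves finite meets; either way, it is the tight control over flatness afforded by the main theorem that will ultimately make the proof go through.
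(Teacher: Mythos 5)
There is a genuine gap, and it begins with the order convention. Under the correspondence of Theorem~\ref{thm:lattices}, the partial order on flat idempotents compatible with \emph{inclusion} of smashing $\otimes$-ideals is $F\le F'$ iff $F\otimes F'\simeq F'$ (equivalently $\Ker(F\otimes-)\subseteq\Ker(F'\otimes-)$; see Definition~\ref{def:le-idemp} and Proposition~\ref{prop:idemp<-abel}). Your convention $e\le f$ iff $e\otimes f\cong e$ is the \emph{reverse} of this, so your ``isomorphism of posets'' is in fact an anti-isomorphism. Concretely, $e\otimes f$ is the \emph{join} of the two idempotents, corresponding to the smashing ideal generated by the union (Proposition~\ref{prop:join}), not the meet; the meet corresponds to the intersection of smashing ideals and is \emph{not} given by the tensor product. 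This is not a harmless sign flip: a frame and a coframe are genuinely inequivalent notions, so proving that $\otimes$ distributes over your ``joins'' would at best establish the wrong (dual) law for the lattice of smashing ideals ordered by inclusion.

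The reversal also undoes your main argument. Once $\otimes$ is recognized as the join, the observation that $e\otimes(-)$ preserves colimits applied to the colimit presentation $\bigjoin_i f_i=\colim_J\bigotimes_{j\in J}f_j$ only yields $e\join\bigjoin_i f_i=\bigjoin_i(e\join f_i)$, which holds trivially in any complete lattice and says nothing about distributivity of the meet. The actual difficulty is to get one's hands on the meet: in the paper this is done by pulling back the Mayer--Vietoris idempotent triangles of Balmer--Favi to $\MT$, characterizing $F_1\meet F_2$ as the flat idempotent fitting into a $3$-periodic exact sequence ending in $F_1\oplus F_2\to F_1\otimes F_2$ (Lemma~\ref{lem:meet}), tensoring that sequence with $F$ to obtain the finite law $F\join(F_1\meet F_2)\simeq(F\join F_1)\meet(F\join F_2)$ (whose finite dual is then formal), and finally passing to a filtered colimit of these exact sequences over finite subsets to handle infinite joins. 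Note also that infinite meets of smashing ideals need not be intersections, so no naive limit or ``reflection'' argument can replace this step. None of this machinery appears in your proposal, and without some substitute for Lemma~\ref{lem:meet} the distributivity claim does not go through.
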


Let us explain the statement and put it in perspective. Our triangular terminology is standard, following~\cite{BalmerFavi11} for instance, and is also recalled in~\ref{pt:remind-big-tt} below.

\smallbreak

In homotopy theory, algebraic geometry, representation theory and beyond, we encounter many `big' tensor-triangulated categories~$\cat T$ as in Theorem~\ref{thm:frame-intro}. Explicit examples include what Hovey, Palmieri and Strickland call `unital algebraic stable homotopy theories'; see~\cite[\S\,1.2]{HoveyPalmieriStrickland97}. Ever since Freyd~\cite{Freyd70}, we know that such big categories~$\cat T$ can really be \emph{wildly big} and we therefore approach them via the more concrete subcategory~$\cat T^c$ of rigid and compact objects -- also known as the `small' or `finite' objects. The interplay between big~$\cat T$ and small~$\cat T^c$ is a rich and fruitful topic. For instance, the famous Telescope Conjecture prophesies in some cases a bijection between thick $\otimes$-ideals $\cat J$ of~$\cat T^c$ and smashing $\otimes$-ideals $\cat S$ of~$\cat T$.

On the `small' side of things, our understanding of the lattice of thick $\otimes$-ideals $\cat J\subseteq\cat T^c$ has progressed substantially over the last three decades, starting with early work of Hopkins~\cite{Hopkins87} and others. In its modern general form, this theory goes by the name of \emph{tensor-triangular geometry}, for which we refer to~\cite{Balmer05a,BalmerICM} or~\cite{Stevenson16pp}. The foundation of tt-geometry involves a topological space, $\Spc(\cat T^c)$, called the \emph{spectrum} of~$\cat T^c$. An early result in the subject~\cite[Thm.\,4.10]{Balmer05a} states that the lattice of thick $\otimes$-ideals $\cat J\subseteq\cat T^c$ is isomorphic to the lattice of open subsets of~$\Spc(\cat T^c)$ with its Hochster-dual topology. This space $\Spc(\cat T^c)$ also carries so-called `supports' for objects of~$\cat T^c$ and allows us to follow geometric intuition.

Unfortunately, on the `big' side of the story, we lack an analogous geometric understanding of the lattice of smashing $\otimes$-ideals of~$\cat T$. No reasonable `big spectrum' for~$\cat T$ has been discovered yet. This hypothetical topological space controlling the smashing $\otimes$-ideals would open the door to what we could call `big tt-geometry', \ie a geometric study of the big tensor-triangulated category~$\cat T$ in the same way that $\Spc(\cat T^c)$ underlies the tt-geometry of the essentially small category~$\cat T^c$.

It is therefore desirable to understand how far the lattice of smashing $\otimes$-ideals of~$\cat T$ is from being \emph{spatial}, \ie in bijection with the lattice of open subsets of some topological space. Our Theorem~\ref{thm:frame-intro} makes a critical step in this direction. A complete lattice is a \emph{frame} if the meet operation (infimum)~$\meet$ distributes over arbitrary joins (supremum)~$\join$ as follows:
\begin{equation}
\label{eq:frame}%
A \meet \big(\,\bigjoin_{i\in I} B_i\big) \ = \ \bigjoin_{i\in I} (A \meet B_i)\,.
\end{equation}
This distributivity is a non-trivial condition on a lattice which is of course necessary for being spatial: just read $\meet$ as intersection and $\join$ as union in~\eqref{eq:frame}. Theorem~\ref{thm:frame-intro} is therefore a necessary property for the existence of the `big spectrum' of~$\cat T$. Given the bumpy history of the Telescope Conjecture, it could be a very difficult problem to decide in general whether the lattice of smashing $\otimes$-ideals of~$\cat T$ is spatial or not.

However, in the mathematical school of \emph{point-less topology}, a frame (or the dual notion of locale) is considered as good a topological object as topological spaces themselves; see Johnstone~\cite{Johnstone83}. In other words, one can develop topological ideas using only frames, at the cost of avoiding the points of usual point-set topology. From that perspective, if it turns out that the `big spectrum' does not always make sense as a topological space, our Theorem~\ref{thm:frame-intro} would nonetheless allow for a point-less approach to `big tt-geometry'. Future research will tell.

\medbreak

Let us briefly highlight the importance of dealing with \emph{tensor}-triangulated categories~$\cat T$, as opposed to mere triangulated categories. Already in the case of the essentially small~$\cat T^c$, the lattice of plain thick subcategories of~$\cat T^c$ can fail to be distributive. See Remark~\ref{rem:no-tensor-no-frame}. The fundamental gain of having a tensor product is that it allows us to intersect thick $\otimes$-ideals by tensoring their objects. At the end of the day the proof of distributivity as in~\eqref{eq:frame} boils down to distributivity of tensor product over coproduct, which is part of the basic properties of the tensor. The importance of the tensor will similarly be observed here, when we prove distributivity in Section~\ref{se:distribute}.

\medbreak

Beyond the lofty goal of exploring `big tt-geometry', we undertook this work with another motivation. We wanted to analyze the impact of the tensor structure of~$\cat T$ on a standard invariant of~$\cat T^c$, namely the abelian category $\MT$ of \emph{modules} over~$\cat T^c$. This Grothendieck category $\MT$ consists of additive contravariant functors from~$\cat T^c$ to abelian groups and is sometimes called its \emph{functor category}. It receives $\cat T$ via the so-called \emph{restricted-Yoneda functor}
\[
\yoneda:\cat T\too \MT
\]
defined by $\yoneda(X)=\Homcat{T}(-,X)\restr{\cat T^c}$. This construction is another example of the interaction between big~$\cat T$ and small~$\cat T^c$. Note that the functor~$\yoneda$ is usually not fully faithful outside of~$\cat T^c$ and there is no simple general description of its essential image $\yoneda(\cat T)$ in~$\MT$. For our tensor-triangulated~$\cat T$, we show in Appendix~\ref{app:MT} that $\MT$ admits a (closed) monoidal structure such that $\yoneda:\cat T\to \MT$ is monoidal. Moreover the image~$\yoneda(X)$ of every $X\in\cat T$ is \emph{flat} in~$\MT$; see Proposition~\ref{prop:tens-MT}. This appendix is of independent interest and will be used in other forthcoming work as well.

The bulk of the present article is devoted to connecting the lattice of smashing $\otimes$-ideals of~$\cat T$ to a suitable lattice of isomorphism classes of objects in~$\MT$. To understand this result, recall that smashing $\otimes$-ideals of~$\cat T$ are in one-to-one correspondence with isomorphism classes of \emph{idempotent triangles}, namely distinguished triangles in~$\cat T$
\[
\bbe\otoo{\eps} \unit \otoo{\eta} \bbf \otoo{\omega} \Sigma \bbe
\]
such that $\bbe\otimes \bbf=0$. See for instance~\cite[\S\,3]{BalmerFavi11}. Equivalently, the object $\bbf$ is a \emph{right-idempotent}, that is, $\bbf\otimes \eta$ is an isomorphism $\bbf\isoto \bbf\otimes \bbf$. The corresponding smashing subcategory is~$\cat S:=\Ker(\bbf\otimes-)=\bbe\otimes\cat T$. Restricted-Yoneda $\yoneda:\cat T\to\MT$ sends any right-idempotent~$\bbf$ to a flat right-idempotent $F=\yoneda(\bbf)$ in~$\MT$.
Smashing $\otimes$-ideals $\cat S\subseteq\cat T$ are ordered by inclusion. Similarly, isomorphism classes of right-idempotents~$[\bbf]_\simeq$ in $\cat T$, or isomorphism classes of flat right-idempotents~$[F]_\simeq$ in~$\MT$, admit a compatible partial order, that we review in Section~\ref{se:idemp}.

With these notations, our key result is the following:

\begin{Thm}
\label{thm:lattices}%
Restricted-Yoneda $\yoneda:\cat T\to \MT$ induces a lattice isomorphism:
\[
\xymatrix@C=.5em{
\left\{{{\displaystyle\textrm{smashing $\otimes$-ideals}}
 \atop{\vphantom{I^{I^I}}\displaystyle \cat S\subseteq\cat T}}\right\}
   \ar@{}[rr]|-{\cong}_-{\textrm{\cite{BalmerFavi11}}}
&&
\left\{{{\displaystyle\textrm{right-idempotents}}
 \atop{\vphantom{I^{I^I}}\displaystyle \unit \to \bbf \textrm{ in }\cat T}}\right\}_{\!\!/\simeq}
 \kern-1em  \ar[rrr]^-{\yoneda}
&&&
\left\{{{\displaystyle\textrm{flat right-idempotents}}
 \atop{\vphantom{I^{I^I}}\displaystyle\unit \to F \textrm{ in }\MT}}\right\}_{\!\!/\simeq}
}\]
whose inverse is given by $[F]_\simeq\mapsto \cat S_F:=\Ker(F\otimes\yoneda)=\SET{X\in\cat T}{F\otimes \yoneda(X)=0}$.
\end{Thm}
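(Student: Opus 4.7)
The first bijection is already known from~\cite[\S\,3]{BalmerFavi11}, so the plan reduces to showing that the restricted Yoneda functor induces a bijection $[\bbf]_\simeq \mapsto [\yoneda(\bbf)]_\simeq$ between right-idempotents in $\cat T$ and flat right-idempotents in $\MT$, and that the formula $[F]_\simeq \mapsto \cat S_F$ provides an inverse through the Balmer--Favi correspondence. Compatibility with the lattice structures then comes essentially for free, since the identification $\cat S_F = \Ker(\bbf\otimes-)$ will be read off on both sides.

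To see the map is well-defined, I invoke the Appendix: $\yoneda$ is monoidal and every $\yoneda(X)$ is flat in $\MT$ by Proposition~\ref{prop:tens-MT}. Hence a right-idempotent $\eta\colon\unit\to\bbf$ in $\cat T$ is sent to a flat right-idempotent $\yoneda(\eta)\colon\unit\to\yoneda(\bbf)$ in $\MT$ (using $\yoneda(\unit)=\unit$), and isomorphisms of right-idempotents, being commuting triangles under $\unit$, are clearly preserved.

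For injectivity and the inverse formula, I would check that $\cat S_F$ reproduces the smashing ideal associated to $\bbf$ in the case $F=\yoneda(\bbf)$. Monoidality of $\yoneda$ gives $\cat S_F=\{X\in\cat T \,:\, \yoneda(\bbf\otimes X)=0\}$, and rigid-compact generation ensures that $\yoneda$ is conservative (vanishing of $\yoneda(Y)$ means $\Homcat{T}(C,Y)=0$ for every $C\in\cat T^c$, forcing $Y=0$). Therefore $\cat S_F=\Ker(\bbf\otimes-)$, which is precisely the smashing ideal of $\bbf$; passing back through~\cite{BalmerFavi11} recovers $[\bbf]_\simeq$ uniquely, proving injectivity.

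The main obstacle is \emph{surjectivity}: producing, for a given flat right-idempotent $F\in\MT$, a right-idempotent $\bbf\in\cat T$ with $\yoneda(\bbf)\simeq F$. My plan is to use a Lazard-type presentation in $\MT$: since the finitely generated projectives of $\MT$ are exactly the representables $\yoneda(C)$ with $C\in\cat T^c$ (by idempotent-completeness of $\cat T^c$), any flat $F$ admits a filtered presentation
\[
F \;=\; \colim_{i\in I}\,\yoneda(X_i), \qquad X_i\in\cat T^c.
\]
I would then set $\bbf:=\hocolim_{i\in I}X_i\in\cat T$. Because each $X_i$ is compact, testing against $C\in\cat T^c$ turns this filtered homotopy colimit into the corresponding filtered colimit of abelian groups, so that $\yoneda(\bbf)\simeq F$. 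The morphism $\unit\to F$, seen as an element of $F(\unit)=\colim\Homcat{T}(\unit,X_i)$, lifts to some $\unit\to X_{i_0}\to\bbf$; conservativity of $\yoneda$ then upgrades the flat right-idempotent structure of $F$ to a right-idempotent structure on $\bbf$. The subtle technical points here are the coherent lifting of the idempotent datum along the colimit and independence from the chosen Lazard presentation, and this is where I expect the bulk of the work to sit.
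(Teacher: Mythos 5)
Your reduction of the problem is right, and your treatment of well-definedness and injectivity matches the paper's: monoidality and flatness of $\yoneda$ come from Proposition~\ref{prop:tens-MT}, and conservativity of $\yoneda$ gives $\cat S_{\yoneda(\bbf)}=\Ker(\bbf\otimes-)$, so the composite back to smashing ideals is the identity. The gap is in surjectivity, and it is fatal as proposed. First, ``flat'' in this paper means that the \emph{Day convolution} $F\otimes-$ is exact; it is not established (and is not obvious) that such an $F$ admits a Lazard-type presentation as a \emph{filtered} colimit of representables $\yoneda(x)$ with $x\in\cat T^c$ --- the canonical colimit $F=\colim_{(\hat x\to F)}\hat x$ over the slice category is not filtered in general. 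Second, and decisively, even granting such a presentation over a filtered poset $I$, the object $\hocolim_{i\in I}X_i$ does not exist: a triangulated category only supports \emph{countable sequential} homotopy colimits (as the cone of $1-\tau$ on a countable coproduct), and the paper explicitly flags that ``the triangulated setting lacks non-countable homotopy colimits.'' So the object $\bbf$ you propose to build cannot be formed, independently of the coherence issues you flag at the end.

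The paper's route around this is quite different and is the real content of Sections~\ref{se:idemp-MT}--\ref{se:smash}. From $F$ one forms the Serre $\otimes$-ideal $\cat B_F=\Ker(F\otimes-)$ and proves two facts (Theorem~\ref{thm:F-to-Serre}): $\cat B_F$ is generated by its finitely presented part, and the associated ideal of morphisms $\calJ_F$ in $\cat T^c$ is \emph{idempotent}, $\calJ_F=\calJ_F^2$. The latter rests on the Idempotence Lemma~\ref{lem:I^2} ($I^2=I$ for $I=\ker(\unit\to F)$), which is the technical heart and has no counterpart in your outline. One then shows (Theorem~\ref{thm:B-to-smash}) that $\cat S=\yoneda\inv(\cat B_F)$ is generated as a localizing subcategory by the set of homotopy colimits of \emph{countable} towers of morphisms in $\calJ_F$ --- the only homotopy colimits actually available --- so Neeman's theorem produces the idempotent triangle $\bbe\to\unit\to\bbf$, and idempotence of $\calJ_F$ forces $\bbf\otimes\cat S=0$, i.e.\ $\cat S$ is smashing. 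Surjectivity of $\yoneda$ on idempotents is then deduced indirectly, from the circle of injective maps composing to the identity, rather than by constructing $\bbf$ from $F$ directly. To repair your argument you would need to replace the Lazard/hocolim step by something equivalent to this machinery.
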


An immediate consequence is the fact that the lattice of
  smashing ideals of $\cat T$ is an invariant of the subcategory of
  compact objects $\cat T^c$.

The theorem will be proved at the end of Section~\ref{se:smash}; see~\ref{pt:pf-of-lattice-bijection}. In fact, we shall use another auxiliary lattice, consisting of Serre $\otimes$-ideal subcategories of~$\MT$, which will be introduced in Section~\ref{se:idemp-MT}. This connects nicely with the second author's philosophy, as presented in~\cite{Krause00,Krause05}. There, a connection between the abelian category~$\MT$ and the Telescope Conjecture was first established, without using the tensor. Indeed every smashing subcategory~$\cat S\subseteq\cat T$ is generated by a suitable ideal of \emph{morphisms} in the category~$\cat T^c$, which in turn corresponds to a Serre subcategory of~$\MT$. It follows that the lattice of non-tensor smashing subcategories is complete. However, it is known not to be distributive in general (Remark~\ref{rem:no-tensor-no-frame} again) and so it cannot be a frame, and \textsl{a fortiori} it cannot be spatial.

Although~\cite{Krause00,Krause05} are natural predecessors of the present paper, they are not formal prerequisites for our tensorial treatment, which is essentially self-contained. See further comments in Remark~\ref{rem:conclusion}. With Theorem~\ref{thm:lattices} under roof, we can prove distributivity of flat right-idempotents in the abelian category~$\MT$ and then bring it back down to the triangulated category~$\cat T$. This argument is presented in the final Section~\ref{se:distribute}.

%------------------------------------------------------------------------------
\medbreak
\noindent
\textbf{Acknowledgments}: The first author would like to thank
Bielefeld University for its warm hospitality during a very pleasant research stay, under the auspices of the Alexander von Humboldt Foundation. We would also like to thank Billy Sanders for interesting conversations.

%------------------------------------------------------------------------------
\goodbreak
\section{Quick review of right-idempotents}
\label{se:idemp}%
\medbreak
%------------------------------------------------------------------------------

Let $\cat A$ be a symmetric monoidal category, with tensor $\otimes:\cat A\times \cat A\to \cat A$ and $\otimes$-unit denoted~$\unit\in\cat A$. We treat the canonical isomorphisms $X\otimes \unit\cong X \cong \unit \otimes X$ as identities, although the reader can easily restore them everywhere.

\begin{Def}\label{def:idemp}
A \emph{right-idempotent} in~$\cat A$ is a morphism $\eta:\unit\to F$ (often just denoted~$F$) such that $F\otimes \eta$ is an isomorphism $F\isoto F\otimes F$, equal to~$\eta\otimes F$.
\end{Def}

\begin{Rem}
\label{rem:idemp}%
Given a right-idempotent~$F$, the endo-functor $L=F\otimes-:\cat A\to \cat A$ together with the natural transformation $\eta\otimes-:\Id\to L$ provide a (Bousfield) localization on~$\cat A$. The essential image of~$F\otimes-$, \ie the category of local objects
\[
F\otimes \cat A :=\SET{X\in\cat A}{X\simeq F\otimes Y \textrm{ for some }Y\in\cat A}
\]
is readily seen to be $\otimes$-absorbant: $\cat A\otimes (F\otimes \cat A)\subseteq F\otimes \cat A$. We have $F\otimes \cat A=\SET{\mbox{$X\in\cat A$}}{\eta\otimes X:X\to F\otimes X\textrm{ is an isomorphism}}$. Also the functor $F\otimes-:\cat A\to F\otimes \cat A$ is left adjoint to inclusion $F\otimes\cat A \hook \cat A$. See~\cite[\S\,I.1]{GabrielZisman67} if necessary.
\end{Rem}

We can define a partial order on idempotents in several equivalent ways. The idea for $F\le F'$ is that localizing at $F'$ goes \emph{further} than localizing at~$F$.
\begin{Def}\label{def:le-idemp}
Let $\unit\oto{\eta}F$ and $\unit\oto{\eta'}F'$ be two right-idempotents (Definition~\ref{def:idemp}). We write $F\le F'$ when $F\otimes\cat A\supseteq F'\otimes\cat A$.
\end{Def}

\begin{Prop}\label{prop:idemp<}
For right-idempotents $F$ and $F'$, the following are equivalent:
\begin{enumerate}[\rm(i)]
\item $F\le F'$ in the sense of Definition~\ref{def:le-idemp}.
\item There exists an isomorphism $F'\simeq F\otimes F'$
\item $\eta\otimes F':F'\to F\otimes F'$ is an isomorphism.
\item \label{it:<-varphi} There exists a morphism $\varphi:F\to F'$ such that $\varphi\eta=\eta'$.
\end{enumerate}
If this holds, the morphism $\varphi:F\to F'$ in~\eqref{it:<-varphi} is unique: $\varphi=(\eta\otimes F')\inv(F\otimes \eta')$.
\end{Prop}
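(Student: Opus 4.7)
The plan is to establish the cycle (i)$\Rightarrow$(iii)$\Rightarrow$(ii)$\Rightarrow$(i), to handle (iii)$\Leftrightarrow$\eqref{it:<-varphi} separately, and to derive uniqueness from naturality at the end. The three implications among (i)--(iii) should be essentially a restatement of Remark~\ref{rem:idemp}: taking $Y=\unit$ in the defining inclusion $F'\otimes\cat A\subseteq F\otimes\cat A$ puts $F'$ among the $F$-local objects, forcing $\eta\otimes F':F'\to F\otimes F'$ to be an isomorphism, which gives (i)$\Rightarrow$(iii); the step (iii)$\Rightarrow$(ii) is immediate; and (ii)$\Rightarrow$(i) follows by tensoring any chosen isomorphism $F'\simeq F\otimes F'$ with arbitrary $Y\in\cat A$ to see that $F'\otimes Y$ lies in $F\otimes\cat A$.

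For (iii)$\Rightarrow$\eqref{it:<-varphi} I would simply write down the candidate $\varphi:=(\eta\otimes F')^{-1}\circ(F\otimes\eta')$ and verify $\varphi\eta=\eta'$ using the bifunctorial identity $(F\otimes\eta')\circ\eta=\eta\otimes\eta'=(\eta\otimes F')\circ\eta'$.

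The main obstacle is \eqref{it:<-varphi}$\Rightarrow$(iii). Given any $\varphi:F\to F'$ with $\varphi\eta=\eta'$, I would introduce the candidate retraction $\psi:=(\eta'\otimes F')^{-1}\circ(\varphi\otimes F'):F\otimes F'\to F'$, where the invertibility of $\eta'\otimes F'$ comes from the right-idempotent property of~$F'$. A short computation gives $\psi\circ(\eta\otimes F')=\Id_{F'}$, exhibiting $F'$ as a retract of the $F$-local object $F\otimes F'$. To upgrade retract to isomorphism, I would prove the elementary closure principle that any retract of an $F$-local object is itself $F$-local: if $X$ is $F$-local and $Y$ is a retract of $X$ via maps $i:Y\to X$ and $r:X\to Y$ with $ri=\Id_Y$, then $s:=r\circ(\eta\otimes X)^{-1}\circ(F\otimes i)$ is a two-sided inverse of $\eta\otimes Y$, with both verifications reducing to the naturality of $\eta\otimes-$ together with $ri=\Id_Y$. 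Applying this with $X=F\otimes F'$ and $Y=F'$ delivers~(iii).

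Finally, uniqueness and the explicit formula fall out of the same naturality ingredients. Tensoring $\varphi\eta=\eta'$ on the left with $F$ and inverting the isomorphism $F\otimes\eta$ forces $F\otimes\varphi=(F\otimes\eta')\circ(F\otimes\eta)^{-1}$, which depends only on $\eta$ and $\eta'$. Feeding this into the naturality square $(\eta\otimes F')\circ\varphi=(F\otimes\varphi)\circ(\eta\otimes F)$, and using the right-idempotent identity $\eta\otimes F=F\otimes\eta$ to cancel, pins $\varphi$ down to the stated expression $(\eta\otimes F')^{-1}(F\otimes\eta')$.
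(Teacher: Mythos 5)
Your proof is correct. Note that the paper itself does not give an argument here: its ``proof'' consists of the single sentence ``This is a lengthy exercise. See [BD14, \S\,2.12] if necessary.'' So there is no in-paper route to compare against; what you have done is supply the details the authors delegate to Boyarchenko--Drinfeld. Your cycle (i)$\Rightarrow$(iii)$\Rightarrow$(ii)$\Rightarrow$(i) is exactly the content of Remark~\ref{rem:idemp} (using the characterization $F\otimes\cat A=\SET{X}{\eta\otimes X\text{ is invertible}}$), and the genuinely nontrivial step, (iv)$\Rightarrow$(iii), is handled cleanly: the retraction $\psi=(\eta'\otimes F')\inv(\varphi\otimes F')$ satisfies $\psi\circ(\eta\otimes F')=((\varphi\eta)\otimes F')$ conjugated appropriately $=\Id_{F'}$, and your closure principle that a retract of an $F$-local object is $F$-local is verified correctly on both sides ($s\circ(\eta\otimes Y)=ri=\Id_Y$ and $(\eta\otimes Y)\circ s=F\otimes(ri)=\Id_{F\otimes Y}$, both by naturality of $\eta\otimes-$). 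The uniqueness derivation, combining $F\otimes\varphi=(F\otimes\eta')(F\otimes\eta)\inv$ with the naturality square and the identity $\eta\otimes F=F\otimes\eta$ from Definition~\ref{def:idemp}, pins down $\varphi=(\eta\otimes F')\inv(F\otimes\eta')$ as claimed. This is a complete, self-contained proof of the proposition.
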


\begin{proof}
This is a lengthy exercise. See~\cite[\S\,2.12]{BoyarchenkoDrinfeld14} if necessary.
\end{proof}

\begin{Cor}\label{cor:idemp-eq}
For right-idempotents $F$ and $F'$, the following are equivalent:
\begin{enumerate}[\rm(i)]
\item $F\le F'$ and $F'\le F$.
\item The local subcategories coincide $F\otimes\cat A= F'\otimes\cat A$.
\item The objects are isomorphic $F\simeq F'$ in~$\cat A$.
\item There exists an isomorphism $\varphi:F\isoto F'$ such that $\varphi\eta=\eta'$.
\end{enumerate}
In that case, $\varphi$ is unique, $\varphi=(\eta\otimes F')\inv(F\otimes \eta')$, with inverse~$(\eta'\otimes F)\inv(F'\otimes \eta)$.
\qed
\end{Cor}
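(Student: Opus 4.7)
The plan is to derive everything straight from Proposition~\ref{prop:idemp<}. The equivalence (i)~$\iff$~(ii) is tautological: Definition~\ref{def:le-idemp} says $F\le F'$ iff $F\otimes\cat A\supseteq F'\otimes\cat A$ and $F'\le F$ iff the reverse inclusion, so both together give the equality of essential images. The implication (iv)~$\implies$~(iii) is trivial (forget the compatibility with the units). For (iii)~$\implies$~(ii), observe that $F\otimes\cat A$ as described in Remark~\ref{rem:idemp} depends only on the isomorphism class of~$F$, since an isomorphism $F\simeq F'$ induces isomorphisms $F\otimes Y\simeq F'\otimes Y$ for every~$Y$.

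It remains to prove (i)~$\implies$~(iv). Applying Proposition~\ref{prop:idemp<}(iv) to $F\le F'$ gives a morphism $\varphi:F\to F'$ with $\varphi\eta=\eta'$, and applying it to $F'\le F$ gives $\varphi':F'\to F$ with $\varphi'\eta'=\eta$. The composites $\varphi'\varphi:F\to F$ and $\varphi\varphi':F'\to F'$ then satisfy $(\varphi'\varphi)\,\eta=\eta$ and $(\varphi\varphi')\,\eta'=\eta'$ respectively. The uniqueness clause of Proposition~\ref{prop:idemp<}, applied to the trivial relations $F\le F$ and $F'\le F'$ (for which the identity is an obvious witness of~\eqref{it:<-varphi}), forces $\varphi'\varphi=\id_F$ and $\varphi\varphi'=\id_{F'}$. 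Hence $\varphi$ is an isomorphism as required in~(iv).

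The final uniqueness assertion for $\varphi$ and the explicit formula $\varphi=(\eta\otimes F')\inv(F\otimes \eta')$ are inherited verbatim from Proposition~\ref{prop:idemp<}; the formula for $\varphi\inv$ is obtained by interchanging the roles of $F$ and $F'$. There is no genuine obstacle here: the whole argument is a short, essentially formal two-sided companion to the one-sided statement of Proposition~\ref{prop:idemp<}, and the only subtle point is to realise that one can invoke the uniqueness in~\eqref{it:<-varphi} to upgrade a pair of mutually inverse-like morphisms into honest inverses.
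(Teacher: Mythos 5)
Your proposal is correct, and it is exactly the argument the paper intends: the corollary is stated with no proof because it is regarded as an immediate consequence of Proposition~\ref{prop:idemp<}, and your derivation (including the use of the uniqueness clause applied to $F\le F$ and $F'\le F'$ to see that the two comparison morphisms are mutually inverse) is the standard way to fill in those details.
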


\begin{Def}
In that case we say that $F$ and $F'$ are \emph{equivalent} and we simply write $F\simeq F'$. It is clear from Definition~\ref{def:le-idemp} that $\le$ is well-defined on equivalence classes of idempotents. For instance, the initial idempotent is~$\unit$. If $\cat A$ is additive and $\otimes$ is additive in each variable then $0$ is the final idempotent.
\end{Def}

\begin{Rem}
If we know for some reason that the class of equivalence classes of right-idempotents is a set, then it is a partially ordered set under~$\le$ in view of Corollary~\ref{cor:idemp-eq}. We now indicate that any two elements always admit a supremum:
\end{Rem}

\begin{Prop}\label{prop:join}
Let $\eta_1: \unit\to F_1$ and $\eta_2: \unit \to F_2$ be right-idempotents. Then $\eta_1\otimes\eta_2:\unit\to F_1\otimes F_2$ is a right-idempotent which is the join of~$F_1$ and~$F_2$:
\[
 F_1\otimes F_2 = F_1 \join F_2 \equalby{def.} \sup\{F_1,F_2\}\,.
\]
\end{Prop}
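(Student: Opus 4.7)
The plan is to use the four equivalent characterizations of $\le$ given in Proposition~\ref{prop:idemp<} as the main tool, since right-idempotence and the partial order are both characterized in terms of certain morphisms being isomorphisms. The entire argument is essentially formal juggling with symmetry and associativity of~$\otimes$, together with the absorbance property of local subcategories noted in Remark~\ref{rem:idemp}.

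First I would check that $\eta_1\otimes\eta_2:\unit\to F_1\otimes F_2$ really is a right-idempotent. By symmetry and associativity of~$\otimes$, the morphism $(F_1\otimes F_2)\otimes(\eta_1\otimes\eta_2)$ is (up to canonical isomorphism) the tensor of $F_1\otimes\eta_1$ and $F_2\otimes\eta_2$, each of which is an isomorphism by hypothesis; hence so is their tensor product. The same reshuffling identifies $(F_1\otimes F_2)\otimes(\eta_1\otimes\eta_2)$ with $(\eta_1\otimes\eta_2)\otimes(F_1\otimes F_2)$, so both conditions in Definition~\ref{def:idemp} are satisfied.

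Next I would verify that $F_1\otimes F_2$ is an upper bound for $\{F_1,F_2\}$. Using criterion~(iii) of Proposition~\ref{prop:idemp<}, the inequality $F_1\le F_1\otimes F_2$ amounts to showing that $\eta_1\otimes(F_1\otimes F_2):F_1\otimes F_2\to F_1\otimes F_1\otimes F_2$ is an isomorphism, and this is immediate by tensoring the isomorphism $F_1\otimes\eta_1 = \eta_1 \otimes F_1$ with $F_2$. Symmetrically, $F_2\le F_1\otimes F_2$.

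Finally, for the universal property, I would take any right-idempotent $G$ (with unit map~$\eta_G$) such that $F_1\le G$ and $F_2\le G$, and show $F_1\otimes F_2\le G$. Again by criterion~(iii) this means showing that $(\eta_1\otimes\eta_2)\otimes G:G\to F_1\otimes F_2\otimes G$ is an isomorphism. I would factor this as the composite
\[
G\ \xrightarrow{\eta_2\otimes G}\ F_2\otimes G\ \xrightarrow{\eta_1\otimes(F_2\otimes G)}\ F_1\otimes F_2\otimes G.
\]
The first arrow is an isomorphism since $F_2\le G$. For the second arrow, note that $F_2\otimes G$ lies in $G\otimes\cat A$, hence in $F_1\otimes\cat A$ by Definition~\ref{def:le-idemp} applied to $F_1\le G$; by the characterization of local objects in Remark~\ref{rem:idemp}, $\eta_1$ tensored with any object of~$F_1\otimes\cat A$ is an isomorphism. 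So the second arrow is also an isomorphism, completing the proof. There is no real obstacle here; the only thing to be mindful of is invoking the $\otimes$-absorbance property of the local subcategory $F_1\otimes\cat A$ at the right moment, rather than trying to push $\eta_2$ past $\eta_1$ by brute force.
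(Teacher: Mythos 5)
Your argument is correct, and it is exactly the routine verification that the paper leaves to the reader with the remark ``straightforward consequence of Definition~\ref{def:le-idemp}'': idempotence and the upper-bound property by reshuffling tensor factors, and the universal property via the $\otimes$-absorbance of the local subcategory from Remark~\ref{rem:idemp}. (The only point worth making explicit is that identifying $(F_1\otimes F_2)\otimes(\eta_1\otimes\eta_2)$ with $(\eta_1\otimes\eta_2)\otimes(F_1\otimes F_2)$ uses the hypothesis $F_i\otimes\eta_i=\eta_i\otimes F_i$ in addition to symmetry and associativity.)
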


\begin{proof}
Straightforward consequence of Definition~\ref{def:le-idemp}.
\end{proof}

\goodbreak
\begin{center}
*\ *\ *
\end{center}
\goodbreak

\begin{Hyp}\label{hyp:ab-tens}
We further assume that $\cat A$ is \emph{abelian}. We assume $\otimes$ to be additive in each variable but we do \emph{not} assume that $\otimes$ is exact on either side. So everything we say here holds for the opposite category~$\cat A\op$, \ie for \emph{left-idempotents}.
\end{Hyp}

\begin{Def}
An object $F\in\cat A$ is called \emph{flat} if $F\otimes-:\cat A\to \cat A$ is exact.
\end{Def}

\begin{Rem}\label{rem:idemp-ab}
Let $\eta:\unit\to F$ be such that $F\otimes\eta$ is an isomorphism $F\isoto F\otimes F$. Then $\eta\otimes F$ is also an isomorphism but let us not assume that it agrees with $F\otimes \eta$.
Consider the full subcategory $\Ker(F\otimes-):=\SET{X\in\cat A}{F\otimes X=0}$ of~$\cat A$.
\begin{enumerate}[\rm(a)]
\item\label{it:perp}
For every $X\in\Ker(F\otimes-)$ and $Y\in F\otimes\cat A$, we have $\Homcat{A}(X,Y)=0$. Indeed, let $\alpha:X\to Y$ with $F\otimes X=0$ and $Y$ such that $\eta\otimes Y:Y\isoto F\otimes Y$ (Remark~\ref{rem:idemp}). Composing $\alpha$ with this isomorphism gives $(\eta\otimes Y)\circ\alpha=\eta\otimes \alpha=(F\otimes\alpha)\circ(\eta\otimes X)$ which factors via $F\otimes X=0$. This shows that $\alpha=0$.
\smallbreak
\item\label{it:eta-F}
Suppose moreover that $F$ is flat. Then automatically $F\otimes\eta=\eta\otimes F$. Indeed, $\coker(\eta)$ belongs to~$\Ker(F\otimes-)$ by flatness of~$F$; as $(F\otimes \eta)\eta=\eta\otimes \eta=(\eta\otimes F)\eta$, we see that $F\otimes \eta-\eta\otimes F$ factors via a morphism $\coker(\eta)\to F\otimes F$ which must be the zero morphism by~\eqref{it:perp}, since $\coker(\eta)\in\Ker(F\otimes-)$ and $F\otimes F\in F\otimes \cat A$.
\end{enumerate}
\end{Rem}

\begin{Def}\label{def:flat-idemp}
A \emph{flat right-idempotent} $F$ is a morphism $\eta:\unit\to F$ with $F$ flat and $F\otimes \eta:F\isoto F\otimes F$. By Remark~\ref{rem:idemp-ab}\,\eqref{it:eta-F}, this is also an idempotent in the sense of Definition~\ref{def:idemp}. The partial order remains as before (Definition~\ref{def:le-idemp}).
\end{Def}

We can now add to the list of Proposition~\ref{prop:idemp<} an abelian condition:

\begin{Prop}\label{prop:idemp<-abel}
For flat right-idempotents $F$, $F'$, the following are equivalent:
\begin{enumerate}[\rm(i)]
\item $F\le F'$ in the sense of Definition~\ref{def:le-idemp} (see also Proposition~\ref{prop:idemp<}).
\item $\Ker(F\otimes-)\subseteq\Ker(F'\otimes-)$.
\end{enumerate}
\end{Prop}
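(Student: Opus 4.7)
The implication (i) $\Rightarrow$ (ii) is the easy direction. By Proposition~\ref{prop:idemp<} we may use the criterion that there is an isomorphism $F'\simeq F\otimes F'$. So for any $X\in\Ker(F\otimes-)$ we get $F'\otimes X\simeq F\otimes F'\otimes X \simeq F'\otimes (F\otimes X)=F'\otimes 0=0$, hence $X\in\Ker(F'\otimes-)$. This argument only uses that $\otimes$ is additive in each variable and symmetric.

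For (ii) $\Rightarrow$ (i), the idea is to tensor the obvious four-term exact sequence built from $\eta:\unit\to F$ with $F'$ and read off that $F'\otimes\eta$ is an isomorphism. First I would observe that both $K:=\Ker(\eta)$ and $C:=\coker(\eta)$ lie in $\Ker(F\otimes-)$. Indeed, applying the exact functor $F\otimes-$ (using flatness of~$F$) to the exact sequence
\[
0\too K\too \unit\otoo{\eta} F\too C\too 0
\]
yields an exact sequence whose middle map $F\otimes\eta:F\to F\otimes F$ is an isomorphism by Definition~\ref{def:flat-idemp}; hence $F\otimes K=0$ and $F\otimes C=0$.

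Now the hypothesis~(ii) upgrades this to $F'\otimes K=0$ and $F'\otimes C=0$. Applying the exact functor $F'\otimes-$ (using flatness of~$F'$) to the same four-term exact sequence, the outer terms vanish and we are left with the isomorphism $F'\otimes\eta:F'\isoto F'\otimes F$. Composing with the symmetry $F'\otimes F\simeq F\otimes F'$ shows that $\eta\otimes F':F'\to F\otimes F'$ is an isomorphism, which is criterion~(iii) of Proposition~\ref{prop:idemp<}, and therefore $F\le F'$.

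The only subtle point is the observation that flatness of~$F$ forces $K$ and $C$ to be annihilated by $F\otimes-$; once this is in hand the argument is essentially a one-line diagram chase. No additional categorical input beyond flatness of the two idempotents and the equivalent forms of $\le$ from Proposition~\ref{prop:idemp<} is needed.
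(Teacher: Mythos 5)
Your proof is correct and follows essentially the same route as the paper's (much terser) argument: the paper likewise dismisses (i)~$\Rightarrow$~(ii) as clear and, for the converse, observes that $\ker(\eta)$ and $\coker(\eta)$ lie in $\Ker(F\otimes-)\subseteq\Ker(F'\otimes-)$, so that tensoring with the flat $F'$ makes $F'\otimes\eta$ an isomorphism. Your additional care in checking that flatness of $F$ forces $F\otimes\ker(\eta)=F\otimes\coker(\eta)=0$, and in passing through the symmetry to match criterion (iii) of Proposition~\ref{prop:idemp<}, is exactly the detail the paper leaves implicit.
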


\begin{proof}
(i) clearly implies~(ii). Conversely, if (ii) holds then $\ker(\eta),\coker(\eta)\in\Ker(F\otimes-)\subseteq\Ker(F'\otimes -)$ shows that $F'\otimes \eta$ is an isomorphism, hence~(i).
\end{proof}

\begin{Cor}\label{cor:idemp-eq-ab}
For flat right-idempotents $F$, $F'$, the following are equivalent:
\begin{enumerate}[\rm(i)]
\item $F$ and $F'$ are equivalent: $F\simeq F'$ (see also Corollary~\ref{cor:idemp-eq}).
\item $\Ker(F\otimes-) = \Ker(F'\otimes-)$.
\qed
\end{enumerate}
\end{Cor}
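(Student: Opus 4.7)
The statement is essentially a two-sided application of Proposition~\ref{prop:idemp<-abel}. The plan is to simply combine that proposition with Corollary~\ref{cor:idemp-eq}, which characterizes equivalence of right-idempotents as the antisymmetry relation $F \le F'$ and $F' \le F$. Nothing new is needed; we are repackaging $\le$ in kernel-theoretic terms.

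More concretely, I would argue as follows. By Corollary~\ref{cor:idemp-eq} (applied to the flat right-idempotents~$F$ and $F'$, which by Definition~\ref{def:flat-idemp} are in particular right-idempotents in the sense of Definition~\ref{def:idemp}), the equivalence $F \simeq F'$ holds if and only if both $F \le F'$ and $F' \le F$. By Proposition~\ref{prop:idemp<-abel}, each of these two inequalities translates into an inclusion of kernel subcategories: $F \le F'$ iff $\Ker(F \otimes -) \subseteq \Ker(F' \otimes -)$, and symmetrically for the other direction. Combining the two inclusions gives the desired equality $\Ker(F \otimes -) = \Ker(F' \otimes -)$.

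There is no real obstacle here, and the proof reduces to the one-line chain
\[
F \simeq F' \iff F \le F' \text{ and } F' \le F \iff \Ker(F \otimes -) = \Ker(F' \otimes -),
\]
which is why the authors left the proof as a \qed. The only point worth noting is that flatness of both~$F$ and~$F'$ is needed so that Proposition~\ref{prop:idemp<-abel} applies in both directions; had we only assumed one of them to be flat, only one of the two equivalences in that proposition would be available.
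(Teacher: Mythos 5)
Your proposal is correct and is exactly the argument the paper intends by stamping the statement with a \qed: combine Corollary~\ref{cor:idemp-eq} (equivalence is the antisymmetry of $\le$) with Proposition~\ref{prop:idemp<-abel} (each inequality is a kernel inclusion). Nothing more is needed.
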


\begin{Cor}
\label{cor:set-coh}%
If $\cat A$ is a Grothendieck category and $\otimes$
  preserves coproducts, then there is only a set of isomorphism classes of flat right-idempotents.
\end{Cor}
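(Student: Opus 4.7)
The plan is to identify isomorphism classes of flat right-idempotents with certain subcategories of~$\cat A$, and then invoke the classical fact that there are only a set of such subcategories in a Grothendieck category. By Corollary~\ref{cor:idemp-eq-ab}, two flat right-idempotents are equivalent (equivalently, isomorphic) if and only if they share the same kernel subcategory $\cat L_F := \Ker(F\otimes-)$, so it suffices to show that the collection of subcategories of~$\cat A$ arising as $\cat L_F$ for some flat right-idempotent $F$ is a set.

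First I would verify that each $\cat L_F$ is a \emph{localizing} subcategory of~$\cat A$ in Gabriel's sense. Flatness of~$F$ makes $F\otimes-$ exact, so $\cat L_F$ is closed under subobjects, quotients and extensions, i.e.\ it is a Serre subcategory. The hypothesis that $\otimes$ preserves coproducts gives the remaining closure property: if $F\otimes X_i=0$ for all $i$, then $F\otimes\bigoplus_i X_i = \bigoplus_i F\otimes X_i = 0$. Hence $\cat L_F$ is localizing.

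I would then conclude by citing the classical result (Gabriel, Popescu) that a Grothendieck category admits only a set of localizing subcategories. Concretely, $\cat A$ is well-powered and admits a generator~$U$, and every localizing subcategory $\cat L$ is determined by its associated Gabriel filter $\{U'\subseteq U \mid U/U'\in\cat L\}$ on the (set of) subobjects of~$U$; such filters form a subclass of the power set of a set.

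The main obstacle is conceptual rather than computational: one has to be confident that $\cat L_F$ really is recoverable from this filter attached to the generator. If the authors prefer a fully self-contained argument avoiding the Gabriel-Popescu black box, one can instead observe directly that every object of~$\cat A$ is a quotient of a coproduct of copies of~$U$, so membership of $X$ in $\cat L_F$ is detected by the subobject of such a coproduct presenting~$X$; combined with closure of $\cat L_F$ under coproducts and cokernels, this already restricts the possible $\cat L_F$ to a set-sized amount of data.
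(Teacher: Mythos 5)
Your argument is correct and is essentially the paper's own proof: identify isomorphism classes of flat right-idempotents with the localizing Serre subcategories $\Ker(F\otimes-)$ via Corollary~\ref{cor:idemp-eq-ab}, and invoke the fact that a Grothendieck category has only a set of such subcategories. You simply spell out the closure properties and the Gabriel-filter justification that the paper leaves implicit.
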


\begin{proof}
If $F$ is a flat right-idempotent then $\Ker(F\otimes-)$ is a localizing Serre subcategory of $\cat A$ and a Grothendieck category only has a set of such subcategories.
\end{proof}

%------------------------------------------------------------------------------
\goodbreak
\section{From right-idempotents to Serre subcategories}
\label{se:idemp-MT}%
\medbreak
%------------------------------------------------------------------------------

Let $\cat T$ be a rigidly-compactly generated tensor
  triangulated category and
\[
\cat A:=\MT
\]
the Grothendieck category of modules over~$\cat T^c$. For details
about~$\cat T$ and a general introduction to the module
category~$\MT$, the reader is invited to consult
Appendix~\ref{app:MT}.

In this section, we associate a Serre subcategory to any flat
right-idempotent $\unit\oto{\eta}F$ (Definition~\ref{def:flat-idemp})
in $\cat A$. The critical fact is the following:
\begin{Lem}[Idempotence Lemma]
\label{lem:I^2}%
Let $\eta:\unit\to F$ be a flat right-idempotent in~$\cat A$ and let $I=\ker(\eta)\ointo{\iota} \unit$ its kernel. Then the morphism $\iota\otimes I:I\otimes I\to I$ is an epimorphism, equal to~$I\otimes\iota$.
\end{Lem}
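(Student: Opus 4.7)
The plan is to split the claim into its two assertions and tackle them separately.

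For the equality $\iota\otimes I=I\otimes\iota$, observe that both morphisms $I\otimes I\to I$, postcomposed with the monomorphism $\iota:I\hook\unit$, reduce to the same map $\iota\otimes\iota:I\otimes I\to\unit$. Since $\iota$ is monic, the two must coincide already as morphisms into $I$.

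For the epi assertion, I factor $\eta$ as $\unit\oto{q}\img(\eta)\oto{i}F$, with $q$ epi and $i$ monic. Since $\MT$ is closed monoidal (Appendix~\ref{app:MT}), $-\otimes I$ has a right adjoint, hence is right exact. Applied to $0\to I\to\unit\oto{q}\img(\eta)\to 0$ this yields
\[
I\otimes I\,\oto{\iota\otimes I}\,I\,\to\,\img(\eta)\otimes I\,\to\,0\,,
\]
so epi-ness of $\iota\otimes I$ is equivalent to $\img(\eta)\otimes I=0$. Flatness of $F$ together with $F\otimes\eta$ being an isomorphism readily gives the preliminary identities $F\otimes I=0$ and $F\otimes\img(\eta)\cong F$. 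The key intermediate step is the vanishing of the \emph{morphism} $\iota\otimes\img(\eta):I\otimes\img(\eta)\to\img(\eta)$: indeed $\iota\otimes q:I\to\img(\eta)$ equals $q\iota=0$ (because $\eta\iota=0$ and $i$ is monic), and factoring $\iota\otimes q=(\iota\otimes\img(\eta))\circ(I\otimes q)$ with $I\otimes q$ epi (right exactness plus $q$ epi) forces $\iota\otimes\img(\eta)=0$. As a corollary one obtains $q\otimes\img(\eta):\img(\eta)\isoto\img(\eta)\otimes\img(\eta)$.

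What remains---and what I expect to be the hard step---is to promote the vanishing of the \emph{map} $\iota\otimes\img(\eta)$ to the vanishing of its source \emph{object} $I\otimes\img(\eta)$. The issue is that without flatness of $\img(\eta)$ (which need not hold in general), the right-exact sequence $I\otimes\img(\eta)\to\img(\eta)\to\img(\eta)\otimes\img(\eta)\to 0$ does not automatically embed $I\otimes\img(\eta)$ into $\img(\eta)$. My approach would be to combine the Tor long exact sequences in the Grothendieck category $\MT$ arising from $0\to\img(\eta)\to F\to\coker(\eta)\to 0$ and $0\to I\to\unit\to\img(\eta)\to 0$: flatness of $F$ (so $\Tor_1(F,I)=0$) identifies $\img(\eta)\otimes I$ with $\Tor_1(\coker(\eta),I)$, which should then be killed using $I\otimes\coker(\eta)=0$ (immediate from $F\otimes I=0$) and flatness of $\unit$. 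The principal obstacle is that such Tor-chases tend to re-identify $\img(\eta)\otimes I$ circularly with itself; closing the loop will likely require exploiting either a purity property of the inclusion $\img(\eta)\hook F$ in~$\MT$ (making $i\otimes I$ monic, whereupon $F\otimes I=0$ immediately forces $\img(\eta)\otimes I=0$) or a Lazard-style presentation $F=\colim\yoneda(X_i)$ of the flat object $F$, reducing the vanishing to a computation at the level of compact objects in~$\cat T$.
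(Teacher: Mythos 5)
Your first two steps are sound and agree with the paper: the equality $\iota\otimes I=I\otimes\iota$ via the monomorphism $\iota$, and the reduction of the epi claim to the vanishing of the object $C\otimes I$, where $C=\img(\eta)$. Your observation that the \emph{morphism} $\iota\otimes C$ is zero (hence $\beta\otimes C:C\isoto C\otimes C$, where $\beta:\unit\onto C$) is correct and is even a slight shortcut compared with the paper, which obtains $C\otimes\beta:C\isoto C\otimes C$ by a diagram chase. But the step you flag as hard is indeed where the proof is missing, and your proposed Tor-chase does not close: from $0\to C\to F\to \coker(\eta)\to 0$ and $0\to I\to\unit\to C\to 0$ one gets, exactly as you predict, only the circular identifications $C\otimes I\cong\Tor_1(\coker(\eta),I)\cong\Tor_1(C,C)\cong C\otimes I$ (using flatness of $F$ and $\unit$ and the iso $C\cong C\otimes C$); no combination of these two short exact sequences alone forces the vanishing. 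Your two suggested escapes are not substantiated: purity of $C\into F$ is essentially equivalent to the statement you are trying to prove (nothing in the hypotheses gives it independently), and a Lazard-style presentation of $F$ is not developed and does not obviously interact with $I=\ker(\eta)$.

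The paper breaks the circle with an input special to $\cat A=\MT$ that your proposal never uses: by Proposition~\ref{prop:enough}, every $M$ sits in an exact sequence $0\to M\to P_0\to P_1\to P_2\to\Sigma M\to 0$ with $P_0,P_1,P_2$ flat, whence $\Tor_i(M,N)\cong\Tor_{i+3}(\Sigma M,N)$ for $i\ge 1$ and, crucially, a natural \emph{monomorphism} $\Tor_3(\Sigma M,N)\into M\otimes N$ as in~\eqref{eq:Tor-3}. This converts tensor-level vanishing into higher-Tor vanishing: from $F\otimes K=0$ one gets $K\otimes K=0$ for $K=\coker(\eta)$, hence $\Tor_3(K,K)=0$ by the monomorphism above (applied with $M=\Sigma\inv K$), and dimension shifting along the flat $F$ in $C\into F\onto K$ gives $\Tor_1(C,C)\cong\Tor_2(C,K)\cong\Tor_3(K,K)=0$; then $C\otimes I\cong\Tor_1(C,C)=0$, which is exactly the vanishing you need. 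Without this suspension-periodicity of Tor (or some equally genuine substitute), your argument remains a correct reduction plus an unproven claim, so as it stands the proof has a real gap.
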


\begin{Rem}
\label{rem:I^2}%
If we think of $\unit\in\cat A$ as a ring in the trivial way $\unit\otimes\unit\cong\unit$, a subobject~$I\into \unit$ is simply an ideal. If we define the square ideal $I^2\into \unit$ as the image $\img(I\otimes I\to \unit)$ inside $\unit$, then Lemma~\ref{lem:I^2} reads $I^2=I$. Hence the name.
\end{Rem}

\begin{proof}[Proof of Lemma~\ref{lem:I^2}:]
Since $\iota\circ (I\otimes \iota)=\iota\otimes \iota=\iota\circ(\iota\otimes I)$ and $\iota$ is a monomorphism, we have $I\otimes \iota=\iota\otimes I$.
Let us first establish some general facts about the abelian category $\cat A=\MT$. Since $\cat A$ has enough flats (see Propositions~\ref{prop:enough} and~\ref{prop:tens-MT}), we can consider the derived functors $\Tor_*$ of~$\otimes$. The auto-equivalence $\Sigma:\cat A\isoto\cat A$ from~\ref{pt:sigma} extends suspension~$\Sigma:\cat T\isoto \cat T$ and is compatible with the tensor in that $M\otimes\Sigma N\cong \Sigma(M\otimes N)\cong (\Sigma M)\otimes N$. On~$\Tor_*$ it yields: $\Tor_*(M,\Sigma N)\cong \Sigma\Tor_*(M,N)\cong \Tor_*(\Sigma M,N)$. Every object $M\in\cat A$ fits in an exact sequence
\begin{equation}
\label{eq:M-3}%
0\to M\to P_0\to P_1\to P_2\to \Sigma M\to 0
\end{equation}
with $P_0,P_1,P_2$ flat, by Propositions~\ref{prop:enough} and~\ref{prop:tens-MT}.
This yields natural isomorphisms
\begin{equation}\label{eq:Tor-Sigma}
\Tor_{i}(M,N) \cong \Tor_{i+3}(\Sigma M,N) %\cong \Sigma \Tor_{i+3}(M,N)
\end{equation}
for all $i\ge 1$, all $M$ and $N$ in~$\cat A$. Moreover, if $L=\img(P_0\to P_1)$ in~\eqref{eq:M-3}, we have two exact sequences $0\to M\to P_0 \to L\to 0$ and $0\to L \to P_1\to P_2\to \Sigma M\to 0$, from which we deduce respectively a natural monomorphism $\Tor_1(L,N)\into M\otimes N$ and an isomorphism $\Tor_3(\Sigma M,N)\cong \Tor_1(L,N)$ for all~$N\in\cat A$. Combining them, we have a natural monomorphism
\begin{equation}\label{eq:Tor-3}
\Tor_{3}(\Sigma M,N) \into M\otimes N
\end{equation}
for all~$M,N\in\cat A$. This concludes the generalities about~$\cat A$.

Consider now the image~$C$ and the cokernel~$K$ of our morphism~$\eta$:
\[
\xymatrix@R=1em{
& \unit \ar[rr]^-{\eta} \ar@{->>}[rd]_-{\beta}
&& F \ar@{->>}[rd]^-{\delta}
\\
I \ar@{ >->}[ru]^-{\iota}
&& C \ar@{ >->}[ru]_-{\gamma}
&& K.
 }
\]
The strategy is to hit everything in sight with~$\otimes$. First, tensoring this diagram with the flat object~$F$ and using the assumption that $F\otimes\eta$ is an isomorphism, we get
\begin{equation}
\label{eq:F-IKC}%
F\otimes I=0,
\qquad
F\otimes K=0
\qquadtext{and}
F\otimes \beta:F \isoto F\otimes C\,.
\end{equation}
The relation $F\otimes K=0$ and the epimorphism $\delta \otimes K: F\otimes K\onto K\otimes K$ force $K\otimes K=0$. Combining with~\eqref{eq:Tor-3}, we see that $\Tor_3(K,K)=0$ and using the short exact sequence $C\into F\onto K$ and flatness of~$F$ twice, we deduce
\begin{equation}
\label{eq:Tor-CC}%
0=\Tor_3(K,K)\cong \Tor_2(C,K)\cong \Tor_1(C,C)\,.
\end{equation}
From $F\otimes I=0$ in~\eqref{eq:F-IKC} and the epimorphism $\delta\otimes I:\, F\otimes I \onto K\otimes I$, we see that $K \otimes I=0$. The long $\Tor_*(K,-)$ exact sequence associated to $I\into \unit \onto C$ then gives
\begin{equation}
\label{eq:K-KC}%
K\otimes\beta:\ K\isoto K\otimes C
\qquadtext{and}
\Tor_{1}(K,C) = 0.
\end{equation}
The long $\Tor_*(-,C)$ exact sequence associated to $C\into F \onto K$ therefore gives the exact sequence at the top of the following commutative diagram:
\[
\xymatrix{
0 \ar[r]
& \ {\underbrace{\Tor_{1}(K,C)}}_{=0} \ar[r]
& C\otimes C \ar[r]^-{\gamma\otimes C}
& F\otimes C \ar[r]^-{\delta\otimes C}
& K\otimes C \ar[r]^-{}
& 0
\\
& 0 \ar[r]
& C \ar[r]^-{\gamma} \ar[u]^-{C \otimes \beta}
& F \ar[r]^-{\delta} \ar[u]_-{\simeq}^-{F \otimes \beta}
& K \ar[r]^-{} \ar[u]_-{\simeq}^-{K \otimes \beta}
& 0
}
\]
The second row is just one of our initial short exact sequences and the vertical isomorphisms were obtained in~\eqref{eq:F-IKC} and~\eqref{eq:K-KC}. We have therefore an isomorphism $C\otimes \beta:\ C \isoto C\otimes C$. Using this isomorphism $C\otimes \beta$ in the long $\Tor_*(C,-)$ exact sequence associated to $I\into \unit \oonto{\beta} C$ gives $C\otimes I\simeq \Tor_{1}(C,C)$ which is zero by~\eqref{eq:Tor-CC}. Tensoring $I\ointo{\iota}\unit \onto C$ with~$I$ now gives the surjectivity of $\iota\otimes I:\ I\otimes I\onto I$.
\end{proof}

Recall that $\widehat{(-)}:\cat T\to \MT$ denotes restricted-Yoneda and $\mT$ the finitely presented objects in $\MT$. See~\ref{pt:remind-MT}. Here is a general observation:

\begin{Lem}
\label{lem:I-colim}%
Let $\iota:I\into \unit$ in~$\MT$, or any subobject of some~$\hat z$ for $z\in\cat T^c$. Then $I$ is the filtered colimit (union) of its finitely presented subobjects.
\end{Lem}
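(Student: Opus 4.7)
The plan is to exploit the fact that $\MT$ is a \emph{locally coherent} Grothendieck category. Since $\cat T^c$ is triangulated, every morphism in $\cat T^c$ extends to a distinguished triangle and in particular admits a weak kernel; a classical result (Freyd, and heavily used by Krause) then says that the full subcategory $\mT$ of finitely presented modules is abelian, equivalently that every finitely generated subobject of a finitely presented module is itself finitely presented. This is the one substantial input I will invoke.

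First I would reduce the first case, $I \into \unit$, to the more general one. In a rigidly-compactly generated tensor-triangulated category the unit $\unit_{\cat T}$ is itself compact, so $\unit = \widehat{\unit_{\cat T}}$ is of the form $\hat z$ for $z = \unit_{\cat T} \in \cat T^c$. Hence it suffices to treat a subobject $I \into \hat z$ with $z\in\cat T^c$ arbitrary.

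Next I would apply the standard fact that in any Grothendieck category possessing a family of finitely generated generators, every object is the directed union of its finitely generated subobjects. The family $\{\hat z\}_{z \in \cat T^c}$ is such a family for $\MT$ (each $\hat z$ being representable, hence finitely generated projective), so we can write $I = \bigcup_{\alpha} I_\alpha$ with each $I_\alpha \subseteq I$ finitely generated; the indexing is filtered because the sum of two finitely generated subobjects is again finitely generated. Finally, each $I_\alpha$ is a finitely generated subobject of the finitely presented object $\hat z$, hence finitely presented by local coherence. This exhibits $I$ as the filtered colimit (union) of its finitely presented subobjects, as required.

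The only genuine obstacle is local coherence of $\MT$; once it is in hand the argument is essentially formal. In the context of this paper I expect it either to be recorded in Appendix~\ref{app:MT} or to be cited from Krause's work, since it is a standard and well-documented consequence of the triangulated structure on $\cat T^c$.
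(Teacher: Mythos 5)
Your argument is correct and is essentially the paper's own proof: both rest on the local coherence of $\MT$ (recorded in the appendix, Remark~\ref{pt:coh}), writing $I$ as the filtered union of its finitely generated subobjects and observing that these, being finitely generated subobjects of the finitely presented object $\unit$ (or $\hat z$), are finitely presented. The paper phrases the coherence input via images of maps from finitely presented objects composed with the monomorphism $\iota$, but this is the same mechanism as your formulation.
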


\begin{proof}
This holds because $\MT$ is locally coherent (see~\ref{pt:coh}): Every object is the filtered colimit of its finitely generated subobjects, which are images of finitely presented ones. Let $m\in\mT$ and $\mu:m\to I$. The image of the composite $\iota\mu:m\to \unit$ is finitely presented because both~$m$ and~$\unit$ are. Since $\iota$ is a monomorphism, $\img(\iota\mu)\cong \img(\mu)$ and it follows that $\img(\mu)\subseteq I$ is finitely presented.
\end{proof}

We can now associate to a flat right-idempotent an interesting Serre subcategory.

\begin{Thm}
\label{thm:F-to-Serre}%
Let $\unit\oto{\eta}F$ be a flat right-idempotent in~$\cat A=\MT$. Consider the localizing Serre $\otimes$-ideal of~$\cat A$ (see~\ref{pt:B-loc})
\begin{equation}
\label{eq:BF}%
\cat B_F:=\Ker(F\otimes-)=\SET{M\in\cat A}{F\otimes M=0}
\end{equation}
and the corresponding ideal of morphisms~$\calJ_F$ in~$\cat T^c$, defined by
\begin{equation}
\label{eq:JF}%
\calJ_F:=\SET{f:x\to y\textrm{ in }\cat T^c}{\img(\hat f)\in\cat B_F}\,.
\end{equation}
Then $\cat B_F$ is the Serre $\otimes$-ideal of~$\cat A$ generated by the object~$I=\ker(\eta)$. Moreover:
\begin{enumerate}[\rm(a)]
\item
\label{it:BF-a}%
The subcategory $\cat B_F$ is generated as localizing subcategory of~$\cat A$ by its finitely presented part $\cat B_F\fp=\SET{m\in\cat A\fp}{F\otimes m=0}$, or in formula: $\cat B_F=(\cat B_F\fp)^\to$.
\smallbreak
\item
\label{it:BF-b}%
The ideal of morphisms~$\calJ_F$ is idempotent $\calJ_F=\calJ_F^2$, \ie every morphism $f\in\calJ_F(x,y)$ can be written as $f=h\,g$ with $g\in\calJ_F(x,z)$, $h\in\calJ_F(z,y)$ and $z\in\cat T^c$.
\end{enumerate}
\end{Thm}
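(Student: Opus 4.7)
The plan is to dispatch (a) first, then establish the main identification of $\cat B_F$ as the Serre $\otimes$-ideal generated by $I$, and finally to extract (b) from this identification combined with the Idempotence Lemma~\ref{lem:I^2}.

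First I would handle (a). Since $F$ is flat, the functor $F\otimes-$ preserves monomorphisms, so $\cat B_F=\Ker(F\otimes-)$ is closed under subobjects in $\cat A=\MT$. Combined with the local coherence of $\MT$ (see~\ref{pt:coh}), which ensures every object is the filtered colimit of its finitely presented subobjects, this forces the f.p.\ subobjects of any $M\in\cat B_F$ to lie in $\cat B_F\fp$ and hence gives $\cat B_F\subseteq(\cat B_F\fp)^\to$; the reverse inclusion is automatic as $\cat B_F$ is localizing.

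For the main claim, write $\langle I\rangle$ for the Serre $\otimes$-ideal of $\cat A$ generated by $I$. The inclusion $\langle I\rangle\subseteq\cat B_F$ is immediate once one notes $F\otimes I=0$ (tensor $0\to I\to\unit\to C\to 0$ with $F$ and use that $F\otimes\eta$ is an isomorphism). For the converse, by (a) it suffices to show each $m\in\cat B_F\fp$ lies in $\langle I\rangle$. Here I would tensor the short exact sequences $0\to I\to\unit\to C\to 0$ and $0\to C\to F\to K\to 0$ with $m$, invoke $F\otimes m=0$, and exploit both the idempotence $\iota\otimes I\colon I\otimes I\onto I$ of Lemma~\ref{lem:I^2} and the companion isomorphism $C\otimes\beta\colon C\isoto C\otimes C$ established inside its proof, in order to identify $m$ as a subquotient of some object of the form $I\otimes N$ with $N\in\cat A$.

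Part (b) should then follow. Given $f\in\calJ_F(x,y)$, the image $J:=\img(\hat f)$ is finitely presented (as a quotient of the f.p.\ representable $\hat x$) and lies in $\cat B_F\fp$. Using the main claim one presents $J$ as a subquotient of some $I\otimes N$, and the Idempotence Lemma refactors this presentation through $I\otimes I\otimes N$. Lemma~\ref{lem:I-colim} then lets one replace the auxiliary $N$ by a finitely presented piece without leaving $\cat B_F\fp$, and the full faithfulness of Yoneda $\cat T^c\into\cat A\fp$ lets the resulting factorization in $\cat A\fp$ of $\hat x\onto J\into\hat y$ through a representable $\hat z$, with both factors having image in $\cat B_F$, descend to a factorization $f=hg$ in $\cat T^c$ with $g,h\in\calJ_F$.

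The main obstacle lies in the inner step of the main claim, namely the concrete extraction of a subquotient-of-$I\otimes N$ description of an arbitrary $m\in\cat B_F\fp$. The key leverage is that, as the proof of Lemma~\ref{lem:I^2} already indicates, the subcategory $\cat B_F$ is governed by the finite collection of short exact sequences among $I$, $C$, $K$ and $F$ together with a small bank of Tor-identifications; the idempotence $I^2=I$ is precisely what prevents these Tor-terms from multiplying without end, and is what allows the descent from $m$ to $I$ to terminate.
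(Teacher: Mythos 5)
Your overall architecture (identify $\cat B_F$ as the Serre $\otimes$-ideal generated by $I$, then feed the Idempotence Lemma into (b)) matches the paper, but two of your three steps have genuine problems, and the order in which you take them makes the problems compound.

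First, your proof of (a) rests on the assertion that local coherence makes every object of $\MT$ the filtered colimit of its \emph{finitely presented} subobjects. That is false: local coherence gives the filtered colimit of finitely \emph{generated} subobjects, and these are finitely presented only when the ambient object embeds into something finitely presented (this is exactly the content, and the limitation, of Lemma~\ref{lem:I-colim}, which is stated only for subobjects of $\hat z$ with $z\in\cat T^c$). A localizing Serre subcategory closed under subobjects need not be generated by its finitely presented part, so (a) cannot be dispatched this cheaply. The paper derives (a) \emph{from} the main claim: once $\cat B_F=\langle I\rangle$, Lemma~\ref{lem:I-colim} applies to $I\into\unit$ (because $\unit$ is finitely presented) and exhibits $I$ as a union of f.p.\ subobjects, which then generate $\cat B_F$. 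Since your proof of the main claim in turn invokes (a) ("it suffices to show each $m\in\cat B_F\fp$ lies in $\langle I\rangle$"), the flaw in (a) undermines the whole chain.

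Second, the inner step of your main claim — extracting a subquotient-of-$I\otimes N$ description of $m$ by tensoring the sequences among $I,C,K,F$ — is a plan, not an argument, and you yourself flag it as the main obstacle. The missing idea is simpler and sharper than what you sketch: for \emph{arbitrary} $M\in\cat B_F$ choose a monomorphism $\alpha:M\into Y$ with $Y$ flat (e.g.\ $Y=\hat Z$, by Proposition~\ref{prop:enough}); tensoring the exact sequence $0\to I\to\unit\oto{\eta}F$ with the flat $Y$ gives $0\to I\otimes Y\to Y\oto{\eta\otimes Y}F\otimes Y$ exact, and the composite $M\oto{\alpha}Y\to F\otimes Y$ vanishes because it factors through $F\otimes M=0$. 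Hence $\alpha$ factors through a monomorphism $M\into I\otimes Y$, so $M$ is a \emph{subobject} of $I\otimes Y$ and lies in $\langle I\rangle$ with no detour through finitely presented objects and no use of the auxiliary isomorphism $C\otimes\beta$. Finally, in (b) your outline is in the right spirit but skips the step that produces a \emph{representable} middle term: the f.p.\ pieces $m_1\otimes m_2\otimes\hat y$ are not representable, and one must lift along an epimorphism $\hat x_1\onto m_1$ with $x_1\in\cat T^c$ (using projectivity of $\hat x$) so that the factorization of $\hat f$ passes through $\widehat{x_1\otimes y}$ before full faithfulness of Yoneda on $\cat T^c$ can be invoked.
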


\begin{proof}
As $F$ is flat, it is clear that $\cat B_F=\Ker(F\otimes-)$ is a localizing Serre $\otimes$-ideal; see~\ref{pt:B-loc}. Let $M\in\cat B_F$. Let $\alpha:M\into Y$ be any monomorphism in~$\cat A$ with $Y$ flat. This always exists, for instance with $Y=\hat Z$ for $Z\in\cat T$, by Proposition~\ref{prop:enough}. Consider the following commutative diagram whose second row is exact by flatness of~$Y$:
\[
\xymatrix@R=2em{
&& M \ar[r]^-{\eta\otimes M} \ar@{ >->}[d]^-{\alpha} \ar@{-->}[ld]
& F\otimes M =0 \kern-2em \ar[d]^-{F\otimes \alpha}
\\
0 \ar[r]
& I\otimes Y \ar[r]^-{\iota\otimes Y}
& Y \ar[r]^-{\eta\otimes Y}
& F \otimes Y
}
\]
The assumption that $F\otimes M=0$ guarantees the factorization of $\alpha:\,M\into Y$ via $I\otimes Y$. Since $\alpha$ is a monomorphism, so is the (dotted) arrow $M\to I\otimes Y$, which shows that $M$ belongs to the Serre $\otimes$-ideal generated by~$I$. We have shown
\begin{equation}
\label{eq:BF-I}%
\cat B_F=\SET{M\in\cat A}{\textrm{every }\alpha:M\into Y\textrm{ with $Y$ flat factors via }I\otimes Y\into Y}\,.
\end{equation}
Since $F\otimes I=0$, see~\eqref{eq:F-IKC}, we have $I\in\cat B_F$ and we have therefore shown that $\cat B_F$ is exactly the (localizing) Serre $\otimes$-ideal generated by~$I$. As the object~$I$ itself is generated by its finitely presented subobjects by Lemma~\ref{lem:I-colim}, we see that $\cat B_F$ is generated by the finitely presented subobjects of~$I$, hence we have shown~\eqref{it:BF-a}.

Let us now prove~\eqref{it:BF-b}. Let $f:x\to y$ in~$\cat T^c$ belong to~$\calJ_F$, that is, such that the object $\img(\hat f)$ belongs to~$\cat B_F$. By~\eqref{eq:BF-I} (applied to $M=\img(\hat f)\into \hat y$), we have the following description of the ideal of morphisms~$\calJ_F$:
\begin{equation}
\label{eq:JF-I}%
\calJ_F=\SET{f:x\to y\textrm{ in }\cat T^c}{\hat f:\hat x\to \hat y\textrm{ factors via }\iota\otimes\hat y:\,I\otimes \hat y\into \hat y}\,.
\end{equation}
Hence we have a factorization $\hat x \oto{\xi} I\otimes \hat y\,\ointo{\iota\otimes 1}\,\hat y$ of our~$\hat f$. By Lemma~\ref{lem:I-colim}, the object~$I=\ker(\eta)$ is the filtered colimit of its finitely presented subobjects\,: $I=\colim_{m\subseteq I\textrm{, f.p.}}m$. Combining this with the epimorphism $I\otimes \iota:\,I\otimes I\onto I$ of the Idempotence Lemma~\ref{lem:I^2} and commuting the colimit and the tensor, $I\otimes I = \colim_{m_1,m_2\subseteq I\textrm{, f.p.}} \ m_1\otimes m_2 \ \onto \ I$,
we see that $I$ is the filtered colimit (union) of the images of the~$m_1\otimes m_2\to I$ over~$m_1,m_2\subseteq I$ finitely presented. Tensoring further with~$\hat y$, we have a similar epimorphism
\[
I\otimes I \otimes \hat y= \colim_{m_1,m_2\subseteq I\textrm{, f.p.}} m_1\otimes m_2  \otimes \hat y \quad \onto \ I \otimes \hat y.
\]
Returning to our morphism $\hat f:\hat x \oto{\xi} I\otimes\hat y \oto{\iota\otimes 1}\hat y$, we build a commutative diagram
\[
\xymatrix@C=3em{
&&&&&
\\
\hat x \ar[rr]^-{\xi} \ar@{-->}_-{\exists}[rrdd]^-{(1)} \ar@{-->}[rdd]_-{\exists}^-{(2)} \ar@/_2em/@{-->}[rddd]_-{\exists}^-{(3)}
 \ar `l[ddd] `[dddd] `[rrrddd]^-{\hat g} [rrrddd] \ar `u[rrru] `[rrr]_-{\hat f} [rrr]
&& I \otimes \hat y \ar@{ >->}[r]^-{\iota\otimes \hat y}
& \hat y\
\\
&& I \otimes I \otimes \hat y \ar@{=}[d] \ar@{->>}[u]^-{I\otimes\iota \otimes \hat y}%=\iota \otimes I\otimes\hat y}
\\
& m_1\otimes m_2 \otimes \hat y \ar[r]^-{\textrm{can}}
& \colim\limits_{m_1,m_2\subseteq I\textrm{, f.p.}} m_1\otimes m_2 \otimes \hat y
& m_1 \otimes \hat y \ar@{ >->}[uu]^-{\iota_1\otimes \hat y}
\\
& \hat x_1 \otimes m_2 \otimes \hat y \ar@{->>}[u]_-{\pi_1\otimes m_2\otimes \hat y} \ar[rr]_-{\hat x_1\otimes \iota_2\otimes \hat y}
&& \hat x_1 \otimes \hat y \ \ar@{->>}[u]^-{\pi_1\otimes \hat y}
 \ar `r[uuu] `[uuu]_-{\hat h} [uuu]
\\
&&&&&}
\]
Starting at the top, the morphism $\xi:\hat x\to I\otimes \hat y$ can be lifted~(1) along the epimorphism $I\otimes I\otimes \hat y \onto I\otimes \hat y$ (since $\hat x$ is projective); it lifts further~(2) to some $m_1\otimes m_2\otimes \hat y$ in the filtered colimit (since $\hat x$ is finitely presented); finally, if $\pi_1:\,\hat x_1\onto m_1$ is any epimorphism with $x_1\in\cat T^c$, then we can lift further~(3) to $\hat x_1\otimes m_2 \otimes\hat y$ (using $\hat x$ projective again). The anti-diagonal composite $\hat x_1\otimes m_2\otimes\hat y \too \hat y$ is $(\iota_1\pi_1)\otimes\iota_2\otimes\hat y$, where $\iota_i:m_i\to \unit$ is simply the composite $m_i\into I\ointo{\iota}\unit$, for $i=1,2$. This also decomposes as in the above picture as $(\iota_1\otimes\hat y)\,(\pi_1\otimes\hat y)\,(\hat x_1\otimes \iota_2\otimes\hat y)$. Since the Yoneda $\otimes$-functor $\cat T^c\to \cat A\fp$ is fully faithful, we have shown that our morphism $f\in\calJ(x,y)$ factors as a morphism~$g:x\to x_1\otimes y$ followed by a morphism~$h:x_1\otimes y\to y$; the corresponding $\hat g$ and $\hat h$ factor via objects of~$\cat B_F$ since $m_1,m_2\in\cat B_F$, as subobjects of~$I$. Hence $g\in\calJ(x,x_1\otimes y)$ and $h\in\calJ(x_1\otimes y,y)$ as wanted.
\end{proof}

\begin{Rem}
\label{rem:BF-inj}%
It is clear that the subcategory $\cat B_F$ only depends on the isomorphism class~$[F]_{\simeq}$ of the flat right-idempotent~$F$. Furthermore, if $F\le F'$ then $\cat B_{F}\subseteq\cat B_{F'}$. See Proposition~\ref{prop:idemp<-abel}. Thus the assignment $[F]_{\simeq}\mapsto \cat B_F$ is order-preserving and injective on isomorphism classes of idempotents by Corollary~\ref{cor:idemp-eq-ab}.
\end{Rem}

\begin{Rem}
\label{rem:BJ}%
In Theorem~\ref{thm:F-to-Serre}, we associate three structures to a given flat right-idempotent~$F$ in~$\cat A=\MT$: First, the big Serre subcategory~$\cat B:=\Ker(F\otimes-)$ of~$\cat A$, second the small Serre subcategory $\cat B\fp$ of~$\cat A\fp$ and third the ideal $\calJ$ of those morphisms in~$\cat T^c$ whose images belong to~$\cat B\fp$. There is an obvious redundancy in this data, as every single one determines the other two. For instance, given $\calJ$, we have $\cat B\fp=\SET{\img(\hat f)}{f\in\calJ}$ and $\cat B=(\cat B\fp)^\to$. See~\ref{pt:B-loc}. One can therefore try to express all properties of the trio in terms of just one of its members.

Being $\otimes$-ideal has the obvious equivalent formulations for $\cat B$, $\cat B\fp$ and $\calJ$. The crucial idempotence property is best expressed for $\calJ$, as $\calJ=\calJ^2$. In terms of~$\cat B$, it can be shown to be equivalent to $\cat B\fp$ being \emph{perfect} in the terminology of~\cite[\S\,8]{Krause05}, which means that the right adjoint $\cat A/\cat B\hook \cat A$ to the Gabriel quotient $\cat A\onto \cat A/\cat B$ is an exact functor. The `Serre' property on the other hand is clearly best expressed for~$\cat B$ or $\cat B\fp$. If one wants to translate it in terms of~$\calJ$, one needs to characterize the fact that the associated subcategory $\SET{\img(\hat f)}{f\in\calJ}$ of~$\cat A\fp$ is Serre. This condition could be rephrased by saying that $\calJ$ is \emph{cohomological}, which admits various equivalent formulations, see~\cite{Krause05}.

We leave these variations to the reader. Our real contribution here is the new connection between idempotents in~$\MT$ and smashing ideals of~$\cat T$. Hence we use both $\cat B$ and $\calJ$ simultaneously, as mere auxiliaries. See also Remark~\ref{rem:conclusion}.
\end{Rem}

%------------------------------------------------------------------------------
\goodbreak
\section{From Serre subcategories to smashing subcategories}
\label{se:smash}%
\medbreak
%------------------------------------------------------------------------------

We check in complete generality that the conclusions about the Serre $\otimes$-ideal of~$\cat A=\MT$ obtained in Theorem~\ref{thm:F-to-Serre}\,\eqref{it:BF-a} and~\eqref{it:BF-b} are sufficient to construct a smashing $\otimes$-ideal of~$\cat T$. Terminology is reviewed in~\ref{pt:B-loc} if necessary.
\begin{Thm}
\label{thm:B-to-smash}%
Let $\cat B\subseteq\cat A=\MT$ be a localizing Serre $\otimes$-ideal and consider the corresponding ideal $\calJ=\SET{f:x\to y\textrm{ in }\cat T^c}{\img(\hat f)\in\cat B}$ of morphisms in~$\cat T^c$. Assume that $\cat B$ and $\calJ$ satisfy the following properties (see Remark~\ref{rem:BJ}):
\begin{enumerate}[\rm(i)]
\item
\label{it:hyp-B-i}%
We have $\cat B=(\cat B\fp)^\to$, \ie the subcategory $\cat B$ is generated by~$\cat B\fp=\cat A\fp\cap\cat B$.
\smallbreak
\item
\label{it:hyp-B-ii}%
The ideal $\calJ=\calJ^2$ is \emph{idempotent}, \ie each $f\in\calJ$ equals $h\,g$ for some~$h,g\in\calJ$.
\end{enumerate}
Define the following subcategory of~$\cat T$:
\begin{equation}
\label{eq:S}%
\cat S:=\yoneda\inv(\cat B)=\SET{X\in\cat T}{\hat X\in\cat B}\,.
\end{equation}
Then $\cat S$ is a smashing $\otimes$-ideal of~$\cat T$. Moreover, for every morphism $f:x\to y$ in~$\cat T^c$:
\begin{equation}
\label{eq:J-f}%
{\displaystyle\textrm{$f$ belongs to~$\calJ$, \ie $\img(\hat f)\in\cat B$, if and only if $f$ factors via an object of~$\cat S$.}}
\end{equation}
\end{Thm}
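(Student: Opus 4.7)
The plan is to proceed in three stages. \textbf{Stage~1} (routine): verify that $\cat S$ is a localizing $\otimes$-ideal of~$\cat T$. Since the restricted-Yoneda functor $\widehat{(-)}:\cat T\to\cat A$ is cohomological, preserves coproducts, and is monoidal with flat image (Appendix~\ref{app:MT}), the Serre, localizing, and $\otimes$-ideal properties of~$\cat B$ transfer directly to thickness (plus summand- and shift-closure), coproduct-closure, and $\otimes$-absorbance of~$\cat S$. Here restricted-Yoneda is used as a conservative, monoidal cohomological functor---nothing deeper.

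\textbf{Stage~2} (the heart of the proof): construct a right-idempotent $\eta:\unit\to\bbf$ in~$\cat T$ with $\cat S=\Ker(\bbf\otimes-)$, which by~\cite{BalmerFavi11} exhibits $\cat S$ as a smashing $\otimes$-ideal. I would build such an idempotent first inside the abelian $\cat A=\MT$ and then lift it to~$\cat T$. The Gabriel quotient $Q:\cat A\onto\cat A/\cat B$ admits a right adjoint~$Q^\rho$, and hypotheses~(i) and~(ii) together---the latter being equivalent, by Remark~\ref{rem:BJ} and~\cite{Krause05}, to perfectness of $\cat B\fp$---ensure that the Bousfield localization endofunctor $L:=Q^\rho Q$ is exact and coproduct-preserving. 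The $\otimes$-ideal property of~$\cat B$ then identifies $L$ with $F\otimes-$, where $F:=L(\unit_{\cat A})$ is a flat right-idempotent in $\cat A$ satisfying $\cat B=\Ker(F\otimes-)$. What remains is to lift $\eta:\unit_{\cat A}\to F$ to a morphism $\eta:\unit\to\bbf$ in~$\cat T$ with $\widehat{\bbf}\cong F$: combining rigid-compact generation of~$\cat T$, monoidality of $\widehat{(-)}$, and the fact that restricted-Yoneda is conservative on~$\cat T$, such a lift exists and is again a right-idempotent. Monoidality and conservativity finally give $\Ker(\bbf\otimes-)=\cat S$.

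\textbf{Stage~3} (the factorization~\eqref{eq:J-f}): For $(\Leftarrow)$, if $f=hg$ factors as $x\oto{g}S\oto{h}y$ with $S\in\cat S$, then $\img(\hat f)$ is simultaneously a subobject of~$\hat y$ and a quotient of~$\hat S\in\cat B$, hence lies in the Serre subcategory~$\cat B$, so $f\in\calJ$. For $(\Rightarrow)$, suppose $\img(\hat f)\in\cat B$. Flatness of~$F$ gives $F\otimes\img(\hat f)=0$, and hence $(\eta\otimes\hat y)\circ\hat f=0$. By monoidality of~$\widehat{(-)}$ this reads $\widehat{(\eta\otimes y)\circ f}=0$; since $x\in\cat T^c$ is compact, $\widehat{(-)}$ is faithful on $\Hom_{\cat T}(x,-)$, so $(\eta\otimes y)\circ f=0$ in~$\cat T$. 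The distinguished triangle $\bbe\otimes y\to y\otoo{\eta\otimes y}\bbf\otimes y$ then furnishes the required factorization of~$f$ through $\bbe\otimes y\in\cat S$.

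\textbf{Main obstacle}: The substantive difficulty is concentrated in Stage~2, specifically the lift from the abelian right-idempotent $F\in\MT$ to the triangulated right-idempotent $\bbf\in\cat T$. This is where both hypotheses are genuinely used: (i) lets one build $F$ from the finitely presented, compact side~$\cat T^c$, and (ii) secures the coproduct-preservation of $L$ needed for the lift to be performed on the triangulated side. The rest of the proof is essentially bookkeeping around the dictionary $\cat T\leftrightarrow\MT$ provided by the Appendix and by the prototype non-tensor construction of~\cite{Krause00,Krause05}.
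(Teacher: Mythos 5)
Your Stage~1 and the $(\Leftarrow)$ half of Stage~3 agree with the paper and are fine. The problem is Stage~2, which is where you yourself locate the ``main obstacle'' --- and the obstacle is not overcome. Even granting (via \cite{Krause05}) that hypotheses (i) and (ii) make the localization $L=Q^{\rho}Q$ on $\cat A$ exact and coproduct-preserving, and that $L\cong F\otimes-$ for a flat right-idempotent $F$ with $\cat B=\Ker(F\otimes-)$, the step ``such a lift $\unit\to\bbf$ with $\hat\bbf\cong F$ exists'' is unsupported. None of the three ingredients you invoke produces it: conservativity of $\yoneda$ only detects zero objects; monoidality says nothing about essential surjectivity; and rigid-compact generation does not describe the essential image of $\yoneda$, which (as the introduction stresses) is neither full nor faithful outside $\cat T^c$ and admits no simple description. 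In fact, the assertion that every flat right-idempotent of $\MT$ lifts to a right-idempotent of $\cat T$ is exactly the surjectivity statement in Theorem~\ref{thm:lattices}, whose proof \emph{depends on} Theorem~\ref{thm:B-to-smash}; so as written your argument is circular.

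The paper's route is genuinely different and stays inside $\cat T$: one forms the essentially small collection $\cat G$ of homotopy colimits of countable towers $x_0\to x_1\to\cdots$ of morphisms of $\calJ$ between compacts; hypothesis~(i) shows that every map from a compact into an object of $\cat S$ factors through the homotopy colimit of such a tower, whence $\cat S=\Loc(\cat G)$ is generated by a set and is therefore strictly localizing, giving the triangle $\bbe\to\unit\to\bbf\to\Sigma\bbe$ by Neeman's machinery; hypothesis~(ii) is then used to prove~\eqref{eq:J-f} \emph{first} (any $f\in\calJ$ factors through a tower in $\cat G\subseteq\cat S$), from which $\bbf\otimes f=0$ for all $f\in\calJ$, hence $\bbf\otimes\cat G=0$, hence $\bbf\otimes\cat S=0$ and $\bbe\otimes\bbf=0$. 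Note the order: \eqref{eq:J-f} is an ingredient of the smashing property, not (as in your Stage~3) a consequence of it. Your Stage~3 $(\Rightarrow)$ argument via $(\eta\otimes y)\circ f=0$ and the triangle $\bbe\otimes y\to y\to\bbf\otimes y$ is correct and pleasantly short, but only once $\bbe$ and $\bbf$ have been constructed by some other means. To repair the proof you must replace Stage~2 by a direct construction of the localization in $\cat T$ (the tower argument above), at which point your Stage~3 becomes a legitimate alternative to the paper's derivation of~\eqref{eq:J-f}.
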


\begin{proof}
Clearly $\cat S$ is a $\otimes$-ideal localizing subcategory of~$\cat T$, since $\yoneda:\cat T\to \cat A$ is a homological $\otimes$-functor commuting with coproducts and $\cat B$ is a localizing Serre $\otimes$-ideal of~$\cat A$. (See~\ref{rem:sigma}.) To prove that~$\cat S$ is smashing, we need some preparation.

We present an argument \`a la Neeman. It will use countable sequences
\begin{equation}
\label{eq:x_n's}%
x_0 \otoo{f_1}
x_1 \otoo{f_2}
\cdots
\too x_n \otoo{f_n}
x_{n+1} \otoo{f_{n+1}}
\cdots
\end{equation}
of morphisms $f_n\in\calJ$ in the ideal~$\calJ$ associated to~$\cat B$. We shall give a name to the collection of all homotopy colimits in~$\cat T$ (see~\cite[\S\,1.6]{Neeman01}) obtained in this way:
\begin{equation}
\label{eq:G}%
\cat G:=\SET{\hocolim_{n\in \bbN}x_n}{\cdots x_n\oto{f_n} x_{n+1}\cdots \textrm{ as in~{\eqref{eq:x_n's}}, with all $f_n$ in }\calJ}\,.
\end{equation}
Since $\cat T^c$ is essentially small, this collection $\cat G\subset\cat T$ is essentially small as well.

\begin{Lem}
\label{lem:G-S}%
With the above notation, we have $\cat G\subseteq \cat S$ and $\cat S$ is the localizing subcategory of~$\cat T$ generated by~$\cat G$.
\end{Lem}

\begin{proof}
The hocolim of a sequence as in~\eqref{eq:x_n's} fits in a distinguished triangle in~$\cat T$
\[
\coprod_{n\in \bbN} x_n \ \otoo{1-\tau\,\,} \
\coprod_{n\in \bbN} x_n \to
\hocolim_{n\in\bbN} x_n
\to \Sigma \coprod_{n\in \bbN} x_n
\]
where $\tau$ is defined on each summand by the `shift' $x_n\oto{f_n}x_{n+1}\hook\coprod_\ell x_\ell$. Under restricted-Yoneda~$\yoneda:\cat T\to \MT=\cat A$ this yields a long exact sequence, which in turns maps to a long exact sequence in the Gabriel quotient $\cat A/\cat B$. However, in~$\cat A/\cat B$ the morphism $1-\tau$ is the identity since all components $f_n$ of~$\tau$ have their image in~$\cat B$; hence the `third' object in that sequence is zero in~$\cat A/\cat B$, which means that $\yoneda(\hocolim_nx_n)\in\cat B$ as wanted. Hence $\cat G\subseteq\cat S$.

Fix an object $Y$ in~$\cat S=\yoneda\inv(\cat B)$. Every morphism $f:x\to Y$ with $x\in\cat T^c$ maps under restricted-Yoneda to a morphism $\hat f$ whose target belongs to~$\cat B$ and $\cat B=(\cat B\fp)^\to$ by Hypothesis~\eqref{it:hyp-B-i}. See~\ref{pt:B-loc}. Therefore $\hat f:\hat x\to \hat Y$ factors via a morphism $\hat x\to m$ with $m\in \cat B\fp$. Replacing $m$ by the image of~$\hat x$ if necessary (which is still finitely presented since both~$\hat x$ and~$m$ are), we can assume that $\hat x\onto m$ is onto. Choosing a presentation $\hat z\to \hat x \onto m$ with $z\in\cat T^c$ we can complete $z\to x$ into a distinguished triangle $z\to x \oto{g} y \to \Sigma z$ in~$\cat T^c$ and obtain the following (plain) picture in~$\cat A$:
\[
\xymatrix@R=1em{
\hat z \ar[r] \ar[rdd]_-{=0}
& \hat x \ar[rr]^-{\hat g} \ar@{->>}[rd] \ar[dd]_-{\hat f}
&& \hat y \ar@{-->}@/^1.5em/[lldd]^-{\hat h} \ar[r]
& \Sigma\hat z
\\
&& m \ar@{ >->}[ru] \ar[ld]
\\
& \hat Y
}
\]
The composite $\hat z\to \hat x \to \hat Y$ is zero because it factors via~$m$ and the composite $\hat z\to \hat x\to m$ is already zero. By Yoneda's Lemma, the composite $z\to x \oto{f} Y$ is also zero (see~\ref{eq:yoneda-hom}) and therefore there exists $h:y\to Y$ in~$\cat T$ such that $f=h\,g$. Note that $\hat g$ factors via $m\in\cat B\fp$ so $g\in\calJ(x,y)$.

So we have proved that every morphism $f:x_1\to Y$ with $x_1\in\cat T^c$ factors as the composite of a morphism in~$\calJ(x_1,x_2)$ followed by a morphism $x_2\to Y$ with~$x_2\in\cat T^c$, with which we can repeat the argument. We obtain therefore by induction a sequence of morphisms in~$\calJ$ and a factorization of the original~$f$ via $\hocolim_{n}x_n$:
\[
\xymatrix{
x_1 \ar[r]^-{\in\calJ} \ar[d]^-{f}
& x_{2} \ar[r]^-{\in\calJ} \ar[ld]^-{}
& x_{3} \ar[r]^-{\in\calJ} \ar[lld]^-{}
& \ar@{}[r]|-{\cdots }
& \hocolim_{n}x_{n}=:x_{\infty} \ar[lllld]
\\
Y
}
\]
We have $x_{\infty}\in\cat G$ by definition, see~\eqref{eq:G}. As $x_1\in\cat T^c$ was arbitrary, this proves the implications $\Homcat{T}(\cat G,Y)=0 \implies \Homcat{T}(\cat T^c,Y)=0\implies Y=0$. In the terminology of~\cite{Neeman01}, we have shown that $\cat G\subseteq \cat S$ is a \emph{generating set}. By Neeman~\cite[Prop.\,8.4.1]{Neeman01}, $\cat S$ is generated by~$\cat G$ as a localizing subcategory.
\end{proof}

We can now finish the proof of Theorem~\ref{thm:B-to-smash}\,: In Lemma~\ref{lem:G-S}, we proved that $\cat S$ is the localizing subcategory of~$\cat T$ generated by a \emph{set} of objects, namely any skeleton of~$\cat G$. It follows from~\cite[\S\,8.4]{Neeman01} that $\cat S$ is moreover \emph{strictly} localizing meaning that the inclusion $\cat S\into \cat T$ admits a right adjoint, \ie we obtain a Bousfield localization:
\[
\xymatrix@R=1.7em{
\cat S \ar@<-.3em>@{ >->}[d]_-{\incl\ }
\\
\cat T \ar@<-.3em>@{->>}[d]_-{Q\ } \ar@{->>}@<-.35em>[u]_-{\ \Gamma}
\\
\cat T/\cat S \ar@<-.35em>@{ >->}[u]_-{\ R}
}
\]
In particular, we have a distinguished triangle
\begin{equation}
\label{eq:B-ef}%
\bbe \oto{\eps} \unit \oto{\eta} \bbf \to \Sigma \bbe
\end{equation}
where $\bbe:=\Gamma(\unit)$ belongs to~$\cat S$ and $\bbf:=RQ(\unit)\in\Ker(\Gamma)=\cat S^\perp$ is `local'. Proving that $\cat S$ is smashing amounts to showing that $\bbe \otimes \bbf=0$, \ie that the above triangle is idempotent. Equivalently, we have to show that the localization functor $RQ:\cat T\to \cat T$ is given by ~$RQ(\unit)\otimes-$, which we denoted $\bbf\otimes-$. It is a general fact that there is a natural isomorphism $\bbf \otimes x \cong RQ(x)$ for all compact $x\in\cat T^c$ because these~$x$ are rigid. We recall the argument for completeness; it suffices to test under Yoneda:
\begin{align*}
\Hom & (?,\bbf\otimes x) \cong \Hom(?\otimes x^\vee, RQ(\unit)) \cong \Hom(Q(?\otimes x^\vee), Q(\unit))
\\
& \cong \Hom(Q(?)\otimes Q(x)^\vee, Q(\unit)) \cong \Hom(Q(?), Q(\unit)\otimes Q(x)) \cong \Hom(?, RQ(x))\,.
\end{align*}
In particular, if $f:x\to y$ is a morphism in~$\cat T^c$, we have $Q(f)=0$ if and only if $\bbf\otimes f=0:\bbf\otimes x\to \bbf \otimes y$. The condition $Q(f)=0$ is also equivalent to $f$ factoring via an object of~$\cat S$. It is now a good point to establish property~\eqref{eq:J-f} in the statement.

Let $f:x\to y$ in~$\cat T^c$. Of course, if $f$ factors via some $Y\in\cat S$ then $\hat f$ factors via $\hat Y\in\cat B$. Let us assume the converse, that is, assume that $\img(\hat f)\in\cat B$, or in other words that $f\in\calJ$. This is the place where we use Hypothesis~\eqref{it:hyp-B-ii}, which implies that $f=h\,f_1$ for some $f_1,h\in\calJ$. Repeating and using induction, we produce a countable sequence of morphisms in~$\calJ$ as follows:
\[
\xymatrix@C=4em{
x=x_1 \ar[r]^-{f_1\in\calJ} \ar[d]|-{\ f\in\calJ\ }
& x_{2} \ar[r]^-{\ f_2 \in\calJ\ } \ar[ld]|-{\ \in\calJ\ }
& x_{3} \ar[r]^-{\ f_3\in\calJ\ } \ar[lld]|-{\ \in\calJ\ }
& \ar@{}[r]|-{\cdots }
& \hocolim_{n}x_{n}=:x_{\infty} \ar[lllld]
\\
y \ar@{}[rrru]|-{\cdots }
}
\]
Again, $x_{\infty}\in\cat G$ by~\eqref{eq:G} and therefore $x_{\infty}\in\cat S$ by Lemma~\ref{lem:G-S}. This proves that $f$ factors via an object of~$\cat S$, as claimed in~\eqref{eq:J-f}.

Putting everything together, we have shown that if $f\in\calJ$ then $f$ factors via an object of~$\cat S$, hence $RQ(f)=0$, hence $\bbf\otimes f=0$.

It follows that for any countable sequence of morphisms in~$\calJ$ as in~\eqref{eq:x_n's} we have $\bbf\otimes f_n=0$ for all~$n$ and therefore $\bbf\otimes \hocolim_{n}x_n$ is the hocolim of a sequence of zero morphisms, hence is zero. In other words, $\bbf\otimes\cat G=\{0\}$ in the notation of~\eqref{eq:G}. It follows that $\bbf\otimes \cat S=0$ since $\cat S$ is generated by~$\cat G$ by Lemma~\ref{lem:G-S}. In particular $\bbe\otimes\bbf=0$ and~\eqref{eq:B-ef} is an idempotent triangle. See~\ref{pt:remind-big-tt}. So, $\bbe\otimes \cat T\subseteq\cat S\subseteq \Ker(\bbf\otimes-)=\bbe\otimes\cat T$ which shows that $\cat S=\bbe\otimes \cat T=\Ker(\bbf\otimes-)$ is indeed smashing.
\end{proof}

\begin{Cor}
\label{cor:B-to-smash-inj}%
Let $\cat B$ and $\cat B'$ be two localizing Serre $\otimes$-ideals of~$\MT$ satisfying Hypotheses~\eqref{it:hyp-B-i} and~\eqref{it:hyp-B-ii} of Theorem~\ref{thm:B-to-smash}. Suppose that they induce the same smashing subcategories $\yoneda\inv(\cat B)=\yoneda\inv(\cat B')$ of~$\cat T$. Then $\cat B=\cat B'$.
\end{Cor}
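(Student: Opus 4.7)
The plan is to combine both hypotheses of Theorem~\ref{thm:B-to-smash} with its new conclusion~\eqref{eq:J-f}, reducing the equality $\cat B=\cat B'$ in the Grothendieck category $\MT$ to an equality of morphism ideals in the essentially small category $\cat T^c$, where the hypothesis $\yoneda\inv(\cat B)=\yoneda\inv(\cat B')$ can be applied directly. The first step is to appeal to Hypothesis~\eqref{it:hyp-B-i}, which gives $\cat B=(\cat B\fp)^\to$ and symmetrically $\cat B'=((\cat B')\fp)^\to$. Since a localizing Serre subcategory generated by its finitely presented part is determined by that part, the problem reduces to proving $\cat B\fp=(\cat B')\fp$.

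Next I would descend to the level of morphism ideals, along the lines of Remark~\ref{rem:BJ}. Every finitely presented object of $\MT$ is of the form $\img(\hat f)$ for some morphism $f:x\to y$ in $\cat T^c$ (cf.\ Appendix~\ref{app:MT}), and such an image lies in $\cat B$ precisely when $f\in\calJ$, by the very definition of $\calJ$. It follows that
\[
\cat B\fp \ = \ \SET{\img(\hat f)}{f\in\calJ} \quadtext{and} (\cat B')\fp \ = \ \SET{\img(\hat f)}{f\in\calJ'}\,,
\]
so it suffices to show the equality of ideals $\calJ=\calJ'$ in the small category~$\cat T^c$.

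Finally, the new property~\eqref{eq:J-f} of Theorem~\ref{thm:B-to-smash}, which applies to both $\cat B$ and $\cat B'$ since both are assumed to satisfy the hypotheses, characterizes $\calJ$ as exactly the set of morphisms in $\cat T^c$ that factor through an object of $\cat S=\yoneda\inv(\cat B)$, and analogously for $\calJ'$ via $\cat S'=\yoneda\inv(\cat B')$. The hypothesis $\cat S=\cat S'$ therefore forces $\calJ=\calJ'$, completing the chain of reductions. No serious obstacle is anticipated: the corollary is essentially a short tautological unwinding, its content having already been concentrated in~\eqref{eq:J-f}, and it merely records that the assignment $\cat B\mapsto\yoneda\inv(\cat B)$ is injective once restricted to localizing Serre $\otimes$-ideals satisfying~\eqref{it:hyp-B-i} and~\eqref{it:hyp-B-ii}.
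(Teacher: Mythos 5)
Your argument is correct and is essentially the paper's own proof, just presented in the reverse order of reductions: property~\eqref{eq:J-f} gives $\calJ=\calJ'$, whence $\cat B\fp=\SET{\img(\hat f)}{f\in\calJ}=(\cat B')\fp$, and Hypothesis~\eqref{it:hyp-B-i} then forces $\cat B=\cat B'$. Nothing to add.
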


\begin{proof}
Because of~\eqref{eq:J-f}, $\cat B$ and $\cat B'$ induce the same ideal of morphisms $\calJ=\calJ\,'$ in~$\cat T^c$. Hence $\cat B\fp=\SET{\img(\hat f)}{f\in\calJ}=\SET{\img(\hat f)}{f\in\calJ\,'}=(\cat B')\fp$ and they generate the same localizing Serre subcategory: $\cat B=\cat B'$. See Remark~\ref{rem:BJ}.
\end{proof}

\begin{Rem}
\label{pt:pf-of-lattice-bijection}%
We are ready to prove Theorem~\ref{thm:lattices}. At this stage, all the maps in Theorem~\ref{thm:lattices} have been constructed, namely the announced inverse $[F]_{\simeq}\mapsto \Ker(F\otimes\yoneda)$ is well-defined by composing Theorems~\ref{thm:F-to-Serre} and~\ref{thm:B-to-smash}, as in the following picture:
\[
\kern-.5em
\xymatrix@C=4em@R=2em{
\kern-1em\left\{{{\displaystyle\textrm{smashing $\otimes$-ideals}}
 \atop{\vphantom{I^{I^I}}\displaystyle \cat S\subseteq\cat T}}\right\}
   \ar[d]_-{\cong}^-{\textrm{\cite{BalmerFavi11}}}
&
\left\{{{\displaystyle\textrm{localiz.\,$\otimes$-ideals $\cat B\subseteq\MT$ s.t.\ }\cat B=(\cat B\fp)^\to}
 \atop{\vphantom{I^{I^I}}\displaystyle \textrm{and s.t.\ }\calJ=\SET{f\textrm{ in }\cat T^c}{\img(\hat f)\in\cat B}\textrm{ equals }\calJ^2}}\right\}
 \ar[l]^-{\textrm{(Thm.\,\ref{thm:B-to-smash})}}_-{\yoneda\inv(\cat B)\, \mapsfrom \,\cat B}
\\
\left\{{{\displaystyle\textrm{right-idempotents }}
 \atop{\vphantom{I^{I^I}}\displaystyle \unit \to \bbf \ \textrm{ in }\cat T}}\right\}_{\!\!/\simeq}
 \ar[r]_-{[\bbf]_\simeq\,\mapsto \,[\hat \bbf]_\simeq}^-{\yoneda}
&
\left\{{{\displaystyle\textrm{flat right-idempotents }}
 \atop{\vphantom{I^{I^I}}\displaystyle\unit \to F \textrm{ in }\MT}}\right\}_{\!\!/\simeq}
  \ar[u]_-{\textrm{(Thm.\,\ref{thm:F-to-Serre})}}^-{[F]_\simeq\,\mapsto \,\Ker(F\otimes-)}
}
\]
The right-hand map is injective by Corollary~\ref{cor:idemp-eq-ab} (see Remark~\ref{rem:BF-inj}). The map at the top is injective by Corollary~\ref{cor:B-to-smash-inj}. Finally, let $\cat S\subseteq\cat T$ be a smashing $\otimes$-ideal with right-idempotent~$\bbf$, we have
\[
\cat S=\Ker(\bbf\otimes-)=\yoneda\inv\Ker(\hat\bbf\otimes-)
\]
by conservativity of restricted-Yoneda; see~\ref{pt:remind-MT}. This shows that the bottom map in the above diagram is injective and that the full-circle composition, from top-left to top-left, is the identity. This easily implies that all maps are surjective as well, thus finishing the proof of Theorem~\ref{thm:lattices}.
\qed
\end{Rem}

%------------------------------------------------------------------------------
\goodbreak
\section{Distributivity of the lattice of smashing subcategories}
\label{se:distribute}%
\medbreak
%------------------------------------------------------------------------------

%
\begin{Rem}
It already follows from~\cite{Krause05} that the lattice of smashing subcategories of a compactly generated triangulated category~$\cat T$ is complete. In the case where $\cat T$ is a \emph{tensor} triangulated category, rigidly-compactly generated of course, we verify that the \emph{$\otimes$-ideal} smashing subcategories form a complete sublattice, which then turns out to be a frame. We use Theorem~\ref{thm:lattices} to transpose the question to~$\MT$. The abelian setting is particularly nice since it provides arbitrary colimits, whereas the triangulated setting lacks non-countable homotopy colimits.
\end{Rem}

\begin{Prop}\label{prop:complete}
The lattice of isomorphism classes of flat right-idempotents in~$\cat A=\MT$ is complete. The join of a family $\{F_i\}_{i\in I}$ of flat right-idempotents is given by $\colim_{J\subseteq I}F_J$ in~$\cat A$, where $J$ runs over the poset of finite subsets of~$I$ and $F_J=\join_{j\in J}F_j\cong\otimes_{j\in J}F_j$ (see Proposition~\ref{prop:join}).
\end{Prop}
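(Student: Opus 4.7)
The plan is to realize the supremum of $\{F_i\}_{i \in I}$ as the colimit $F := \colim_{J} F_J$ in $\cat A = \MT$, where $J$ ranges over the filtered poset of finite subsets of $I$. The transition maps $F_J \to F_{J'}$ for $J \subseteq J'$ are the unique comparison morphisms supplied by Proposition~\ref{prop:idemp<}, since $F_J \le F_{J'}$ directly from the definition of finite joins. The essential structural fact I will use throughout is that the tensor on $\MT$ is closed (see Proposition~\ref{prop:tens-MT}) and hence preserves colimits in each variable.

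First I would verify that $F$ is a flat right-idempotent. Flatness is automatic: $F \otimes -$ is a filtered colimit of the exact functors $F_J \otimes -$ (each flat by hypothesis), and filtered colimits are exact in the Grothendieck category $\MT$. For the unit, I would define $\eta: \unit \to F$ as the colimit of the maps $\eta_J: \unit \to F_J$, which assemble compatibly thanks to the uniqueness part of Proposition~\ref{prop:idemp<}. To confirm that $F \otimes \eta$ is an isomorphism, I would compute
\[
F \otimes F \;\cong\; \colim_{J, J'} F_J \otimes F_{J'} \;\cong\; \colim_{J, J'} F_{J \cup J'} \;\cong\; \colim_J F_J = F,
\]
using commutation of $\otimes$ with colimits in both variables, then Proposition~\ref{prop:join}, and finally cofinality of the diagonal in the product of the two filtered posets. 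A tracking of the induced maps identifies this composite with $F \otimes \eta$.

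Next I would show $F$ is the supremum. Each $F_i = F_{\{i\}}$ maps into $F$ via the colimit structure, so $F_i \le F$ by Proposition~\ref{prop:idemp<}. Conversely, if $G$ is a flat right-idempotent with $F_i \le G$ for every $i \in I$, then for each finite $J$ we have $F_J = \bigjoin_{j \in J} F_j \le G$ by iterating Proposition~\ref{prop:join}, and therefore a unique morphism $\varphi_J: F_J \to G$ with $\varphi_J \eta_J = \eta_G$. The uniqueness of these comparison maps forces them to be compatible with the transition morphisms of the colimit, so they descend to a morphism $\varphi: F \to G$ witnessing $F \le G$.

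Completeness of the lattice then follows formally: arbitrary joins exist (the empty join being the initial idempotent $\unit$), and in any poset with all joins the infimum of a family can be constructed as the join of the set of common lower bounds, which is indeed a set by Corollary~\ref{cor:set-coh}. The main delicate point is the computation identifying $F \otimes F$ with $F$, specifically verifying that the cofinality isomorphism really does coincide with the map induced by $F \otimes \eta$; everything else is a formal consequence of the closedness of $\otimes$ on $\MT$ together with the uniqueness clauses in Proposition~\ref{prop:idemp<} and Corollary~\ref{cor:idemp-eq}.
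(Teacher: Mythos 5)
Your proof is correct and follows essentially the same route as the paper: the join is the filtered colimit over the poset of finite subsets, flatness and idempotence follow from exactness of filtered colimits and commutation of $\otimes$ with colimits, and the universal property is verified via the uniqueness of comparison morphisms from Proposition~\ref{prop:idemp<}. The only cosmetic difference is in the upper-bound step, where the paper shows $G\otimes F\simeq G$ by observing that $J\mapsto G\otimes F_J$ is isomorphic to the constant functor, whereas you assemble the unique maps $\varphi_J\colon F_J\to G$ into a cocone and invoke the universal property of the colimit; the two arguments are equivalent.
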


\begin{proof}
More precisely, when $J\subseteq J'$ are finite subsets of~$I$, we
have $F_J\le F_{J'}$. By Proposition~\ref{prop:idemp<}, there exists a
\emph{unique} morphism $\varphi_{J'J}:F_J\to F_{J'}$ such that
$\varphi_{J'J}\eta_{J}=\eta_{J'}$. This is the way we define the
functor from the poset of finite subsets of~$I$ to~$\cat A$. A priori,
the object $F_J$ is only defined up to isomorphism but we choose one
such object~$F_J$ for every~$J$ and then the morphisms are
forced. Changing this choice changes the functor $J\mapsto F_J$ up to
a unique isomorphism, hence yields isomorphic colimits. Since the
colimit is filtered, it is easy to show that $F:=\colim_{J\subseteq
  I}F_J$ remains flat (see~\ref{pt:coh}) and since $\otimes$ commutes
with colimits $F$ remains idempotent. For $i\in I$, we have $F_i\le
F_J$ for all $J\ni i$. Hence $F_J\otimes \eta_i$ is an isomorphism for
all such~$J$. As the~$J$ containing~$i$ are cofinal among finite
subsets~$J\subseteq I$, we get that $F\otimes \eta_i$ is an
isomorphism, hence $F_i\le F$. Conversely, if~$F_i\le F'$ for some
flat right-idempotent $\eta':\unit \to F'$ and all $i\in I$, we have
that $F'\otimes \varphi_{J'J}$ is an isomorphism for all $J\subseteq
J'$, since the morphism $\varphi_{J'J}:F_J\to F_{J'}$ consists of a
tensor of $\eta_j$ for $j\in J'\oursetminus J$. In other words, the
functor $J\mapsto F'\otimes F_J$ is isomorphic to the constant
functor~$J\mapsto F'$ and therefore, interchanging again the
  colimit and the tensor, $F'\otimes F\simeq F'$, which shows that $F\le F'$ as wanted.
\end{proof}

\begin{Lem}\label{lem:meet}
Let $\eta_1: \unit\to F_1$ and $\eta_2: \unit \to F_2$ be two flat idempotents in~$\cat A=\MT$. Let $\eta_{12}:=\eta_1 \otimes \eta_2 :\unit \to F_1\otimes F_2$ be their join (Proposition~\ref{prop:join}). Let $\eta_{12,1}:= F_1\otimes \eta_2 :F_1\to F_1\otimes F_2$ and $\eta_{12,2}:= \eta_1\otimes F_2 :F_2 \to F_1\otimes F_2$ denote the comparison morphisms giving $F_1 \le F_1\otimes F_2$ and $F_2\le F_1\otimes F_2$. Suppose that we have a flat idempotent $\eta:\unit\to F$ fitting in some 3-periodic exact sequence
\begin{equation}\label{eq:meet}
\vcenter{\xymatrix@C=1.5em{
& \cdots \ar[r]^-{\Sigma\inv\omega}
& F \ar[rr]^-{\smat{\varphi_1\\-\varphi_2}}
&& F_1 \oplus F_2 \ar[rrr]^-{\smat{\eta_{12,1} & \eta_{12,2}}}
&&& F_1 \otimes F_2 \ar[r]^-{\omega}
& \Sigma F \ar[r]
& \cdots
}}
\end{equation}
for some morphisms $\varphi_1$ and $\varphi_2$ and $\omega$. Then $F\simeq F_1 \meet F_2$ is the meet (infimum) of~$F_1$ and $F_2$.
\end{Lem}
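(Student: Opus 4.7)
The strategy is to verify the two defining properties of the meet: first, that $F$ is a lower bound for both $F_1$ and $F_2$; second, that every common lower bound lies below~$F$.

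For the lower-bound property, I would use criterion~(ii) of Proposition~\ref{prop:idemp<} and show $F\otimes F_1\simeq F_1$. Since $F_1$ is flat, tensoring the 3-periodic exact sequence~\eqref{eq:meet} with~$F_1$ preserves exactness. The idempotence isomorphisms $F_1\otimes F_1\simeq F_1$ (induced by $F_1\otimes\eta_1$) and $F_1\otimes F_1\otimes F_2\simeq F_1\otimes F_2$ identify the resulting map $F_1\oplus(F_1\otimes F_2)\to F_1\otimes F_2$ with $\smat{F_1\otimes\eta_2 & \text{id}}$, which is split epi with kernel isomorphic to~$F_1$. The same identification applied one period back makes $\Sigma\inv(F_1\oplus F_1\otimes F_2)\to\Sigma\inv(F_1\otimes F_2)$ split epi as well, so by exactness the connecting map $\Sigma\inv(F_1\otimes F_2)\to F\otimes F_1$ vanishes. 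Hence $F\otimes F_1\hook F_1\oplus(F_1\otimes F_2)$ is injective, with image equal to the kernel computed above, yielding $F\otimes F_1\simeq F_1$. Thus $F\le F_1$, and symmetrically $F\le F_2$.

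For the greatest-lower-bound property, let $\eta':\unit\to G$ be a flat right-idempotent with $G\le F_1$ and $G\le F_2$. Since $F_i\le F_1\otimes F_2 = F_1\join F_2$ by Proposition~\ref{prop:join}, transitivity gives $G\le F_1\otimes F_2$. By flatness of~$G$, tensoring~\eqref{eq:meet} with~$G$ again preserves exactness, and the natural transformation $\eta'\otimes-$ provides a morphism of exact sequences from~\eqref{eq:meet} to its $G$-tensored version, by naturality of~$\otimes$. At each of the terms $F_1$, $F_2$, $F_1\oplus F_2$, $F_1\otimes F_2$ and their $\Sigma\inv$, the vertical arrow $\eta'\otimes-$ is an isomorphism by Proposition~\ref{prop:idemp<}(iii), using additivity of~$\otimes$ and compatibility of~$\Sigma$ with~$\otimes$ (see~\ref{pt:sigma}). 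The 5-lemma applied to the 5-term segment
\[
\Sigma\inv(F_1\oplus F_2)\to\Sigma\inv(F_1\otimes F_2)\to F\to F_1\oplus F_2\to F_1\otimes F_2
\]
then forces the middle vertical $\eta'\otimes F:F\to G\otimes F$ to be an isomorphism, which is precisely criterion~(iii) of Proposition~\ref{prop:idemp<} for $G\le F$.

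The main obstacle is the bookkeeping in the lower-bound step: after applying the various idempotence isomorphisms one must recognize the tensored map as the explicit split epi $\smat{F_1\otimes\eta_2 & \text{id}}$, and then invoke the 3-periodicity to propagate the split surjectivity one step back, thereby producing not merely a surjection $F\otimes F_1\onto F_1$ but the full isomorphism. Once this framework is in place, the greatest-lower-bound step is essentially a formal 5-lemma application.
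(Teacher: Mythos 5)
Your proof is correct and follows essentially the same route as the paper: tensoring \eqref{eq:meet} with the flat $F_1$ to kill the connecting map and split off $F\otimes F_1\simeq F_1$, then tensoring with a common lower bound and applying the Five Lemma to the comparison morphism $\eta'\otimes-$. Your version merely spells out the kernel identification more explicitly than the paper does.
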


\begin{proof}
Tensoring the sequence with the flat $F_1$, using that $F_1 \otimes \eta_{12,2}$ is an isomorphism, we deduce that $F_1\otimes\omega=0$. We thus get a split short exact sequence which shows that $F_1\otimes \varphi_1$ is an isomorphism and therefore $F\le F_1$. Similarly, we get $F\le F_2$. Suppose now that $F'$ is another flat right-idempotent such that $F'\le F_1$ and $F'\le F_2$. Tensoring the above sequence with $F'$ and using $\eta'$ to compare the above sequence to the new one, we have isomorphisms `at' $F_1\oplus F_2$ and $F_1\otimes F_2$ since $F_i\otimes \eta':F_i\isoto F_i\otimes F'$ is an isomorphism, for $i=1,2$. We then conclude by the Five Lemma that $F\otimes \eta'$ is also an isomorphism, hence $F'\le F$ as wanted.
\end{proof}

\begin{Thm}\label{thm:frame}
The lattice of isomorphism classes of flat right-idempotents in~$\cat A=\MT$ is \emph{infinitely distributive}, that is, for any (possibly infinite) family $\{F_i\}_{i\in I}$ and any $F$ we have
\[
F\meet (\bigjoin_{i\in I}F_i) \simeq \bigjoin_{i\in I} (F \meet F_i)\,.
\]
In other words, the lattice of smashing $\otimes$-ideals of~$\cat T$ is a frame.
\end{Thm}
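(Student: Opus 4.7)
The plan is to transport the statement via Theorem~\ref{thm:lattices} to the lattice of flat right-idempotents in $\cat A = \MT$ and prove infinite distributivity there, exploiting the abelian structure of $\cat A$. The strategy is in three stages: a kernel characterization of binary meets, the binary distributivity, and a filtered-colimit passage to arbitrary families. Throughout I manipulate the kernel subcategories $\cat B_F = \Ker(F \otimes -)$ attached to a flat idempotent via Proposition~\ref{prop:idemp<-abel}.

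The cornerstone is the identity $\cat B_{F \meet F'} = \cat B_F \cap \cat B_{F'}$ for flat right-idempotents $F, F'$. The inclusion $\subseteq$ is immediate from $F \meet F' \le F, F'$. For the reverse, I would invoke Lemma~\ref{lem:meet}'s 3-periodic exact sequence for $E := F \meet F'$ and split it into short exact sequences
\[
0 \to E \to F \oplus F' \to I_0 \to 0 \quad\text{and}\quad 0 \to I_0 \to F \otimes F' \to \Sigma E \to 0.
\]
Since $F, F', F \otimes F'$ and $\Sigma E$ are all flat, a short $\Tor$-calculation shows $I_0$ is flat, and tensoring the first sequence with $M \in \cat B_F \cap \cat B_{F'}$ yields $E \otimes M = 0$ because both $(F \oplus F') \otimes M$ and $\Tor_1(I_0, M)$ vanish. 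Binary distributivity follows quickly: writing $E := F \meet (G_1 \otimes G_2)$ and $E_i := F \meet G_i$, the inequality $E_1 \otimes E_2 \le E$ is immediate from the lattice laws, since $E_1 \otimes E_2 = E_1 \join E_2$ (Proposition~\ref{prop:join}) sits below both $F$ and $G_1 \otimes G_2$. For the reverse, given $M \in \cat B_E = \cat B_F \cap \cat B_{G_1 \otimes G_2}$, it suffices to verify $E_2 \otimes M \in \cat B_{E_1} = \cat B_F \cap \cat B_{G_1}$: the first membership comes from $F \otimes E_2 \otimes M = E_2 \otimes (F \otimes M) = 0$, and the second from $G_1 \otimes E_2 \otimes M = E_2 \otimes (G_1 \otimes M) = 0$, since $G_1 \otimes M$ lies in $\cat B_F \cap \cat B_{G_2} = \cat B_{E_2}$ (indeed $F \otimes G_1 \otimes M = G_1 \otimes (F \otimes M) = 0$ and $G_2 \otimes G_1 \otimes M = G_1 \otimes G_2 \otimes M = 0$). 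Thus $(E_1 \otimes E_2) \otimes M = E_1 \otimes (E_2 \otimes M) = 0$. Iterating yields $F \meet F_J \simeq \bigjoin_{j \in J}(F \meet F_j)$ for every finite $J \subseteq I$, with $F_J := \bigotimes_{j \in J} F_j$.

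To extend to arbitrary families, use Proposition~\ref{prop:complete} to write $\bigjoin_{i \in I} F_i = \colim_J F_J$ as a filtered colimit. I would apply $\colim_J$ to Lemma~\ref{lem:meet}'s exact sequences for each $F \meet F_J$: since filtered colimits are exact in the Grothendieck category $\cat A$ and tensor commutes with colimits, this produces the exact sequence characterizing $F \meet \colim_J F_J$, while $\colim_J(F \meet F_J)$ remains a flat idempotent (flatness by local coherence, idempotence obtained by passing the isomorphisms $F \meet F_J \isoto (F \meet F_J)^{\otimes 2}$ to the colimit). Lemma~\ref{lem:meet} then identifies this colimit with $F \meet \bigjoin_i F_i$, and Proposition~\ref{prop:complete} identifies it with $\bigjoin_i (F \meet F_i)$, completing the argument.

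The main obstacle is the opening step: invoking Lemma~\ref{lem:meet} presupposes that the abstract meet $F \meet F'$ (which exists by lattice completeness) \emph{actually fits} a 3-periodic exact sequence of the required form. This demands an explicit construction of the meet — for instance by lifting the fiber of $\bbf_1 \oplus \bbf_2 \to \bbf_1 \otimes \bbf_2$ from the triangulated category $\cat T$ and using that restricted-Yoneda is homological — together with the flatness check on the auxiliary object $I_0$. Once the kernel description $\cat B_{F \meet F'} = \cat B_F \cap \cat B_{F'}$ is secured, the rest is routine tensor-product and filtered-colimit bookkeeping.
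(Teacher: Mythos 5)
Your overall architecture --- realize the meet via a Mayer--Vietoris triangle lifted to $\cat T$ through Theorem~\ref{thm:lattices}, establish finite distributivity, then pass to arbitrary joins by taking filtered colimits of the exact sequences and invoking Lemma~\ref{lem:meet} and Proposition~\ref{prop:complete} --- matches the paper, and your infinite-colimit step is essentially the paper's. The gap is in your ``cornerstone'' $\cat B_{F\meet F'}=\cat B_F\cap\cat B_{F'}$, on which your entire binary-distributivity step rests. Two problems. First, the decomposition into short exact sequences $0\to E\to F\oplus F'\to I_0\to 0$ presumes that $E\to F\oplus F'$ is a monomorphism; in a $3$-periodic exact sequence the kernel of that map is the image of $\Sigma\inv(F\otimes F')\to E$, and there is no reason for it to vanish. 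Second, and more seriously, the flatness of $I_0=\img(F\oplus F'\to F\otimes F')$ is asserted but not proved, and I do not see how a ``short $\Tor$-calculation'' could give it: images of morphisms between flat objects of $\MT$ are typically not flat (think of $\img(\hat f)$ for $f$ in $\cat T^c$), and dimension-shifting along the $3$-periodic sequence only cycles back to $\Tor_1(I_0,M)\cong\Tor_4(\Sigma I_0,M)\cong\Tor_1(I_0,M)$ via \eqref{eq:Tor-Sigma}, yielding no vanishing. Without $\Tor_1(I_0,M)=0$ you cannot conclude $E\otimes M=0$, so the inclusion $\cat B_F\cap\cat B_{F'}\subseteq\cat B_{F\meet F'}$ is not established. (Your kernel-chase deducing meet-over-join distributivity \emph{from} the cornerstone is correct and rather slick --- but the cornerstone is precisely the hard content, and it is not clear it can be proved upfront.)

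The paper sidesteps this entirely: it tensors the exact sequence \eqref{eq:meet-again} with the \emph{flat} object $F$ --- which does preserve exactness --- and applies Lemma~\ref{lem:meet} to the resulting sequence to obtain $F\otimes(F_1\meet F_2)\simeq(F\otimes F_1)\meet(F\otimes F_2)$, that is, the law $F\join(F_1\meet F_2)\simeq(F\join F_1)\meet(F\join F_2)$; since the finite distributive law in a lattice is equivalent to its dual, the desired $F\meet(F_1\join F_2)\simeq(F\meet F_1)\join(F\meet F_2)$ follows formally. I recommend replacing your cornerstone with this argument; your colimit passage then goes through essentially as written, provided you also note (as the paper does) that the system of exact sequences indexed by finite $J\subseteq I$ is genuinely a filtered system, which follows from the uniqueness of morphisms of idempotents in Proposition~\ref{prop:idemp<}.
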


\begin{proof}
In~$\cat T$, given a pair of right-idempotents $\bbf_1$ and $\bbf_2$, we have so-called Mayer-Vietoris distinguished triangles~\cite[Thm.\,3.13]{BalmerFavi11}
\begin{equation}
\label{eq:MV}%
\vcenter{\xymatrix@C=1.5em{
\bbf_1\meet \bbf_2 \ar[rr]^-{\smat{\varphi\\-\varphi}}
&& \bbf_1 \oplus \bbf_2 \ar[rr]^-{\smat{\varphi &\  \varphi}}
&& \bbf_1 \otimes \bbf_2 \ar[r]^-{}
& \Sigma \,\bbf_1\meet \bbf_2
}}
\end{equation}
where the morphisms denoted by a generic `$\varphi$' are (the unique) morphisms of idempotents.
This construction is natural in the idempotents~$\bbf_i$ in the sense that given any morphism of idempotents $\varphi_i:\bbf_i\to \bbf_i'$ there exists a \emph{unique} morphism of Mayer-Vietoris triangles which objectwise consists of: a morphism of idempotents $\varphi:\bbf_1\meet \bbf_2\to \bbf_1'\meet \bbf_2'$, the given $\smat{\varphi_1&0\\0&\varphi_2}:\bbf_1\oplus\bbf_2\to \bbf_1'\oplus \bbf_2'$ and their tensor $\varphi_1\otimes \varphi_2:\bbf_1\otimes\bbf_2\to \bbf_1'\otimes\bbf_2'$. Uniqueness refers to~$\varphi$. This result follows easily from the forced nature of~$\varphi$, as in Proposition~\ref{prop:idemp<}. Such morphisms will be referred to as \emph{morphisms of Mayer-Vietoris triangles}. The image of a Mayer-Vietoris triangle~\eqref{eq:MV} in~$\cat A=\MT$ under restricted-Yoneda $\yoneda:\cat T\to \cat A$ is a 3-periodic exact sequence as in~\eqref{eq:meet} above. Since we proved in Theorem~\ref{thm:lattices} that every flat right-idempotent in~$\cat A$ comes from~$\cat T$, we have such an exact sequence in~$\cat A$ for any flat right-idempotents $F_1$, $F_2$ in~$\cat A$:
\begin{equation}\label{eq:meet-again}
\vcenter{\xymatrix@C=1.5em{
& \cdots \ar[r]^-{}
& F_1\meet F_2 \ar[rr]^-{\smat{\varphi\\-\varphi}}
&& F_1 \oplus F_2 \ar[rr]^-{\smat{\varphi &\  \varphi}}
&& F_1 \otimes F_2 \ar[r]^-{}
& \Sigma F_1\meet F_2 \ar[r]
& \cdots
}}\kern-1em
\end{equation}
in which the morphisms denoted by~$\varphi$ are again (the unique) morphisms of idempotents. Lemma~\ref{lem:meet} provides a converse\,: the existence of such a sequence forces the first idempotent to be~$F_1\meet F_2$. Tensoring the above sequence~\eqref{eq:meet-again} with an arbitrary~$F$ thus shows that $F\otimes (F_1\meet F_2)\simeq (F\otimes F_1)\meet(F\otimes F_2)$, hence
\[
F\join (F_1\meet F_2)\simeq (F\join F_1)\meet (F\join F_2)\,.
\]
It is a general fact of lattice theory that this is equivalent to its dual:
\[
F\meet (F_1\join F_2)\simeq (F\meet F_1)\join (F\meet F_2)\,.
\]
A simple induction then implies that for every \emph{finite} $J\subseteq I$ we have
\begin{equation}\label{eq:aux-1}
F\meet \bigjoin_{j\in J} F_j \simeq \bigjoin_{j\in J} (F\meet F_j)\,.
\end{equation}
On the other hand, for every finite $J\subseteq I$, we have from~\eqref{eq:meet-again} an exact sequence:
\[
\xymatrix@C=1.5em{
& \cdots \ar[r]^-{}
& F\meet F_{J} \ar[rr]^-{\smat{\varphi\\-\varphi}}
&& F \oplus F_{J} \ar[rr]^-{\smat{\varphi &\  \varphi}}
&& F \otimes F_{J} \ar[r]^-{}
& \Sigma F\meet F_{J} \ar[r]
& \cdots
}
\]
in which the morphisms $\varphi$ are morphisms of idempotents. By the comments after~\eqref{eq:MV}, for every $J\subseteq J'$ the morphism of idempotent $\varphi_{J'J}:F_J\to F_{J'}$ gives rises to a morphism of Mayer-Vietoris triangles and therefore a morphism of exact sequences:
\[
\xymatrix@C=1.5em{
& \cdots \ar[r]^-{}
& F\meet F_{J} \ar[rr]^-{\smat{\varphi\\-\varphi}} \ar[d]_-{\exists}^-{\varphi}
&& F \oplus F_{J} \ar[rr]^-{\smat{\varphi &\  \varphi}} \ar[d]^-{\smat{1&0\\0&\varphi_{J'J}}}
&& F \otimes F_{J} \ar[r] \ar[d]^-{F\otimes \varphi_{J'J}}
& \Sigma F\meet F_{J} \ar[r]
& \cdots
\\
& \cdots \ar[r]^-{}
& F\meet F_{J'} \ar[rr]^-{\smat{\varphi\\-\varphi}}
&& F \oplus F_{J'} \ar[rr]^-{\smat{\varphi &\  \varphi}}
&& F \otimes F_{J'} \ar[r]^-{}
& \Sigma F\meet F_{J'} \ar[r]
& \cdots
}
\]
This defines a filtered system of exact sequences, indexed over the poset of finite subsets~$J\subseteq I$, by uniqueness of morphisms of idempotents (Proposition~\ref{prop:idemp<}). As filtered colimits are exact in~$\cat A$, we get a 3-periodic exact sequence:
\[
\xymatrix@C=1.3em{
\cdots \ar[r]^-{}
& \colim_{J} F\meet F_{J} \ar[r]^-{}
& F \oplus \bigvee_{i\in I} F_i \ar[rr]^-{\smat{\varphi &\  \varphi}}
&& F \otimes \bigvee_{i\in I} F_i \ar[r]^-{}
& \Sigma \colim_{J} F\meet F_{J}  \cdots
}
\]
Lemma~\ref{lem:meet} implies that $\colim_{J} F\meet F_{J}$ must be $F\meet \bigjoin_{i\in I}F_i$. Combining with the finite result in~\eqref{eq:aux-1}, we have
\[
F\meet\bigjoin_{i\in I} F_i \simeq
\colim_{J} F\meet F_J =
\colim_{J} F\meet \bigjoin_{j\in J} F_j \simeq
\colim_{J} \bigjoin_{j\in J} F\meet F_j \simeq
\bigjoin_{i\in I} F\meet F_i
\]
where the last isomorphism uses Proposition~\ref{prop:complete} for the family $\{F\meet F_i\}_{i\in I}$.
\end{proof}

\begin{Rem}
The above proof passed through the statement that $F\join (F_1\meet F_2)\simeq (F\join F_1)\meet (F\join F_2)$, which is equivalent to its dual. A colimit argument then showed the announced
$F\meet\bigjoin_{i\in I} F_i \simeq \bigjoin_{i\in I} F\meet F_i$. Note however that the latter is \emph{not} equivalent to its dual anymore. Similarly, our colimit argument cannot be replaced by a limit argument because (cofiltered) limits in~$\cat A$ are not exact.
\end{Rem}

\begin{Rem}
\label{rem:no-tensor-no-frame}%
If we do not restrict to ideals, \ie we consider all smashing subcategories of~$\cat T$ or all thick subcategories of~$\cat T^c$, then we may not get a frame; distributivity can fail even for finite joins. For instance, one can fix a field $k$ and consider $\bbP^1_k$, the projective line over $k$. For each $i\in \bbZ$ the line bundle $\mathcal{O}(i)$ on $\bbP^1_k$ generates a rather small thick subcategory
\begin{displaymath}
\cat L(i) := \mathrm{thick}(\mathcal{O}(i)) = \mathrm{add}(\Sigma^n\mathcal{O}(i) \; \vert \; n\in \bbZ) \cong \cat D^\mathrm{perf}(k).
\end{displaymath}
Choosing three distinct integers $i,j,$ and $k$ one can check that
\begin{displaymath}
\cat L(i) \meet (\cat L(j) \join \cat L(k)) = \cat L(i) \quad \text{but} \quad (\cat L(i) \meet \cat L(j)) \join (\cat L(i) \meet \cat L(k)) = 0.
\end{displaymath}
Of course we can inflate these thick subcategories to smashing subcategories to see that the lattice of smashing subcategories is not distributive either. This behavior should not be viewed as pathological, but rather as a common feature when the tensor unit does not generate.
\end{Rem}

\begin{Rem}
One could also consider the larger \emph{Bousfield lattice}, $A(\cat T)$, which consists of the localizing ideals arising as kernels of tensoring with the various objects of $\cat T$, \ie
\begin{displaymath}
A(\cat T) = \SET{\ker(X \otimes -)}{X\in \cat T}.
\end{displaymath}
The reader should be warned that the customary ordering on $A(\cat T)$ is by \emph{reverse inclusion}, which is the opposite of our convention for right-idempotents.

The Bousfield lattice has been studied in various contexts, especially for the stable homotopy category~$\SH$. For instance in~\cite{HoveyPalmieri99} (see also~\cite{Bousfield79b}) it is shown that $A(\SH)$ is not a frame, but it admits a subposet which is a frame and contains the smashing localizations. However, this is not directly comparable to our result as they take for their join operation (the meet with our ordering) the intersection of kernels, and as we shall see in the next remark this operation does not always preserve smashing subcategories.
\end{Rem}

\begin{Rem}
One can verify that under the bijection of Theorem~\ref{thm:lattices} the join constructed in Proposition~\ref{prop:complete} corresponds to the following construction on smashing $\otimes$-ideals: If $\{\cat S_i \;\vert \; i \in I\}$ is a family of smashing subcategories then their join is the localizing subcategory generated by their union
\begin{displaymath}
\bigvee_{i \in I} \cat S_i = \Loc (\cat S_i \;\vert \; i \in I).
\end{displaymath}
Finite meets are also relatively easy: $\cat S_1 \meet \cat S_2 = \cat S_1 \cap \cat S_2$. See~\cite[Prop.\,3.11]{BalmerFavi11}.

As an aside, we note that infinite meets can be somewhat different beasts, and cannot be computed by mere intersection, as the following example shows.

Let us fix a prime $p$ and consider the $p$-local stable homotopy category $\cat T=\SH_{(p)}$. Consider the family of those smashing subcategories of~$\SH_{(p)}$ which contain the following localizing subcategory
\begin{displaymath}
\cat D = \Ker\big(\coprod_{i\geq 0} K(i) \otimes -\big)
\end{displaymath}
of so-called \emph{dissonant spectra}, where $K(i)$ are the Morava $K$-theories at~$p$. The corresponding quotient $\cat H=\SH_{(p)}/\cat D$ is the category of \emph{harmonic spectra}. It follows from the analysis of~$\cat H$ in~\cite{Wolcott15} that the smashing subcategories of~$\SH_{(p)}$ which contain~$\cat D$ are precisely the \emph{finite} ones, and that their intersection is~$\cat D$, which is not smashing. Therefore the infinum of this family is not its intersection.
\end{Rem}

\begin{Rem}
\label{rem:conclusion}%
The reader might want to evaluate the novelty of our results, compared to the pre-existing theory developed without the tensor in~\cite{Krause00} and~\cite{Krause05}. Let us say a word about this in conclusion.

First, and rather obviously, the lattice isomorphism of Theorem~\ref{thm:lattices} relies on the tensor in its very statement: We need $\MT$ to have a tensor to even \emph{speak} of flat right-idempotents. Secondly, our proofs use the tensor in several critical places, actually from the very start with the Idempotence Lemma~\ref{lem:I^2} and its tensor-heavy proof. Also, when we prove in Section~\ref{se:smash} that the subcategory~$\cat S\subseteq\cat T$ is smashing, we use an argument which is different from the one in~\cite{Krause05}; an important aspect is precisely the Idempotence Lemma~\ref{lem:I^2} which allows us to prove idempotence of the ideal of morphisms~$\calJ$ upfront, whereas Krause obtains this idempotence \textsl{a posteriori}, see~\cite[Cor.\,12.6\,(3)]{Krause05}. Finally, we use the tensor to prove distributivity in Theorem~\ref{thm:frame}. We repeat that the latter result is generally false without the tensor. See Remark~\ref{rem:no-tensor-no-frame}. In short, our use of the tensor is essential and allows us to give a lean and quick treatment, as self-contained as possible.

This being said, the pre-tensor theory of~\cite{Krause05} involves several concepts that some readers might be familiar with, and which are interesting in themselves, like \emph{perfect} Serre subcategories, \emph{cohomological quotients}, \emph{flat epimorphisms} or \emph{exact quotients} of categories, \emph{cohomological} or \emph{exact} ideals of morphisms in~$\cat T^c$, and last but not least, \emph{pure-injective} objects in~$\cat T$. We do not use any of those here, although we indicate in Remark~\ref{rem:BJ} why the ideal $\calJ_F$ is cohomological and the subcategory $\cat B_F\fp$ is perfect.
\end{Rem}

%------------------------------------------------------------------------------
\appendix
%------------------------------------------------------------------------------
%------------------------------------------------------------------------------
\goodbreak
\section{The module category~$\MT$ and its tensor structure}
\label{app:MT}%
\medbreak
%------------------------------------------------------------------------------

The goal of this appendix is to provide a one-stop source of information on the module category~$\MT$. This material is mostly well-known, except for some of the tensorial aspects.

\begin{Rem}
\label{pt:remind-big-tt}%
We assume that the triangulated category~$\cat T$ admits arbitrary small coproducts; an object~$x\in\cat T$ is called \emph{compact} if~$\Homcat{T}(x,-)$ commutes with coproducts and $\cat T^c$ denotes the full thick triangulated subcategory of~$\cat T$ consisting of compact objects; a triangulated subcategory $\cat L\subseteq\cat T$ is called \emph{localizing} (respectively \emph{strictly localizing}) if it is closed under arbitrary coproducts (respectively the quotient $\cat T\too\cat T/\cat L$ admits a right adjoint); one assumes $\cat T$ to be \emph{compactly generated} meaning that $\cat T^c$ is essentially small and that $\cat T=\Loc(\cat T^c)$ is the smallest localizing subcategory containing~$\cat T^c$. A strictly localizing subcategory~$\cat S\subseteq\cat T$ is called \emph{smashing} if the right adjoint $\cat T/\cat S\too \cat T$ commutes with coproducts; examples arise as $\cat S=\Loc(\cat J)$ the localizing subcategory generated by a subcategory $\cat J\subseteq\cat T^c$ of compact objects; one says that~$\cat T$ satisfies the \emph{telescope property} if every smashing subcategory~$\cat S$ of~$\cat T$ is of this form: $\cat S=\Loc(\cat S^c)$.

We further assume that $\cat T$ is closed symmetric monoidal, in a compatible way with the triangulation. Essentially, the tensor~$\otimes$ is exact in each variable and preserves coproducts. See details in~\cite[App.\,A]{HoveyPalmieriStrickland97}. We also assume that compact and rigid objects coincide. In words, this is called a \emph{rigidly-compactly generated tensor-triangulated category} (or sometimes colloquially a `big tt-category', the `small' one referring to~$\cat T^c$). These are the categories used in~\cite{BalmerFavi11} for instance. There, it is proved that every smashing $\otimes$-ideal~$\cat S$ in~$\cat T$ is characterized by the isomorphism class of the corresponding idempotent triangle~$\bbe\oto{\eps}\unit\oto{\eta} \bbf\to\Sigma \bbe$, with `idempotent' meaning that $\bbe\oto{\eps}\unit$ is a left-idempotent and $\unit\oto{\eta}\bbf$ is a right-idempotent, which in short reads $\bbe\otimes\bbf=0$.
The smashing $\otimes$-ideal~$\cat S$ corresponding to this triangle is given by $\cat S=\Ker(\bbf\otimes-)$, which is also (the replete-closure of)~$\bbe\otimes\cat T$.
\end{Rem}

\begin{Rem}
\label{pt:remind-MT}%
The category $\MT$ is the category of contravariant additive functors from~$\cat T^c$ to the category~$\Ab$ of abelian groups. It is well-known to be a Grothendieck category (see~\cite[\S\,5.10]{BucurDeleanu68} if necessary) whose finitely presented objects are denoted by~$\mT$. The \emph{restricted-Yoneda functor} $\yoneda:\cat T\to \MT$ maps $X\in \cat T$ to $\yoneda(X)=\hat X$ defined as follows:
\[
\hat X:({\cat T^c})^\mathrm{op}\to \Ab, \quad \hat X(-)= \Homcat{T}(-,X).
\]
The functor $\yoneda:\cat T\to \MT$ is conservative since $\cat T$ is generated by~$\cat T^c$. It is also \emph{homological} (\ie maps distinguished triangles to exact sequences) and preserves arbitrary coproducts. We return to this in~\ref{pt:hatF} below. We have the following commutative diagram
\[
\xymatrix{
\cat T^c \vphantom{I_{I^I}}\  \ar@{^(->}[r]^-{\yoneda} \ar@{^(->}[d]
& \mT \vphantom{I_{I^I}} \ar@{^(->}[d]
 \ar@{}[r]|-{=}
& \cat A\fp\\
\cat T \ar[r]^-{\yoneda}
& \MT \ar@{}[r]|-{=:}
& \cat A\,.
}
\]
The light notation $\hat X$, as opposed to~$\yoneda(X)$, allows us to think of the object $\hat X$ as being \emph{almost}~$X$ itself, although $\yoneda:\cat T\to \MT$ is in general neither full nor faithful outside of~$\cat T^c$. Note however that a simple application of Yoneda's Lemma guarantees that $\yoneda$ induces an isomorphism
\begin{equation}
\label{eq:yoneda-hom}%
\Homcat{T}(x,Y)\isoto \Homcat{A}(\hat x,\hat Y)
\end{equation}
for compact~$x\in\cat T^c$ and arbitrary~$Y\in\cat T$, since both sides of~\eqref{eq:yoneda-hom} are~$\hat Y(x)$. This remains true if we replace~$x$ by a coproduct $\coprod_{i}x_i$ in~$\cat T$ with all~$x_i\in\cat T^c$.
\end{Rem}

\begin{Rem}
\label{pt:sigma}%
Suspension~$\Sigma:\cat T\to \cat T$ induces a suspension~$\Sigma:\cat A\isoto \cat A$ on the module category~$\cat A=\MT$, which is defined by~$\Sigma M(x)=M(\Sigma\inv(x))$. This is done in such a way that $\yoneda:\cat T\to \cat A$ is compatible with suspensions: $\Sigma\circ\yoneda\cong\yoneda\circ\Sigma$.
\end{Rem}

\begin{Rem}
\label{pt:mT}%
Limits and colimits are computed objectwise in the functor category~$\cat A=\MT$. It follows from~\eqref{eq:yoneda-hom} that $\hat x$ is projective and finitely presented in~$\MT$ for every~$x\in\cat T^c$. In fact, the subcategory~$\mT$ is equivalent to the \emph{Freyd envelope of~$\cat{T}^c$}~\cite[Chap.\,5]{Neeman01}, which is Frobenius abelian. The Yoneda embedding $\cat T^c\hook \mT$ identifies~$\cat T^c$ with the projective and injective objects in~$\mT$. Although these objects $\hat x$ remain projective they are usually \emph{not} injective in the whole module category $\MT$. In fact, the functor $\yoneda:\cat T\to \cat A$ restricts to an equivalence between the so-called \emph{pure-injective} objects in~$\cat T$ and the injective objects of~$\MT$. See~\cite{Krause00}. (We shall not use the latter correspondence here.) It is nevertheless useful to be able to embed any finitely presented $m\in\mT$ as $m\into \hat y$ for some $y\in\cat T^c$, or to write $m=\img(\hat f)$ for some $f:x\to y$ in~$\cat T^c$. In particular, any (replete) subcategory~$\cat B_0$ of~$\cat A\fp$ is characterized by the class of morphisms~$f$ in~$\cat T^c$ whose image belongs to~$\cat B_0$. For big modules, we have:
\end{Rem}

\begin{Prop}
\label{prop:enough}%
Let $M$ be in~$\MT$. Then there exists a distinguished triangle $X\oto{f}Y\oto{g} Z\oto{h}\Sigma X$ in~$\cat T$ such that $X$ and $Y$ are (small) coproducts of compact objects and $M\simeq\img(\hat g)$. In particular, there exist an epimorphism $\hat Y\onto M$, a monomorphism $M\into \hat Z$ and an exact sequence
\[
0\to M\to \hat Z \to \Sigma \hat X \to \Sigma \hat Y \to \Sigma M\to 0\,.
\]
\end{Prop}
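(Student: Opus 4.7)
The plan is to build the triangle $X\to Y\to Z$ in~$\cat T$ by first producing a two-step projective presentation of $M$ in~$\cat A=\MT$ using representable modules, and then lifting the resulting morphism of representables back to~$\cat T$. The key enabling fact is that $\{\hat y\mid y\in\cat T^c\}$ is a set of projective generators of the Grothendieck category~$\cat A$: they are projective by~\eqref{eq:yoneda-hom} and they generate because every object of a contravariant functor category on an essentially small category is a colimit of representables. Hence every module admits an epimorphism from a small coproduct of such representables.

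Concretely, I would first choose an epimorphism $\coprod_i\hat y_i\onto M$ with $y_i\in\cat T^c$, set $Y:=\coprod_i y_i\in\cat T$, and rewrite it as $\pi:\hat Y\onto M$ using that $\yoneda$ preserves coproducts. Letting $N:=\ker(\pi)$, I would repeat the procedure to obtain an epimorphism $\hat X\onto N$ with $X=\coprod_j x_j$ another coproduct of compacts. The composite $\hat X\onto N\into \hat Y$ is a morphism of coproducts of compact representables in~$\cat A$, and by the extended Yoneda isomorphism recalled just after~\eqref{eq:yoneda-hom} it lifts uniquely to a morphism $f:X\to Y$ in~$\cat T$. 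I would then complete $f$ to a distinguished triangle $X\oto{f}Y\oto{g}Z\oto{h}\Sigma X$ in~$\cat T$ and apply the homological, coproduct-preserving functor~$\yoneda$.

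The resulting long exact sequence in~$\cat A$ gives $\img(\hat g)=\coker(\hat f)=\hat Y/N\cong M$, which is the main assertion. The announced exact sequence
\[
0\to M\to \hat Z\to \Sigma\hat X\to \Sigma\hat Y\to \Sigma M\to 0
\]
then drops out by splicing the long exact sequence: the leftmost map is the inclusion $M\cong\img(\hat g)\into\hat Z$ arising as $\ker(\hat h)$, while the rightmost cokernel is $\Sigma M=\Sigma\img(\hat g)=\img(\Sigma\hat g)$ by exactness of~$\Sigma$ on~$\cat A$. The only non-bookkeeping step is the lift from~$\cat A$ back to~$\cat T$, and this is precisely where compactness of the $x_j$ and $y_i$ is essential via the Yoneda isomorphism extended to coproducts; without it, an arbitrary map of representables in~$\cat A$ need not come from a morphism in~$\cat T$.
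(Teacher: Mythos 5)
Your argument is correct and is essentially the paper's own proof: choose an epimorphism $\coprod_i\hat y_i\onto M$ (the representables $\hat y$, $y\in\cat T^c$, being projective generators), repeat on the kernel to get a presentation $\hat X\to\hat Y\onto M$ with $X,Y$ coproducts of compacts, lift to $f:X\to Y$ in~$\cat T$ via the Yoneda isomorphism~\eqref{eq:yoneda-hom} extended to coproducts of compacts, complete to a triangle and apply the homological coproduct-preserving functor~$\yoneda$. No substantive difference from the paper's argument.
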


\begin{proof}
Every $M\in\MT$ is a quotient $\coprod_{i\in I}\hat y_i\onto M$ of a coproduct of finitely presented projectives~$\hat y_i$ with $y_i\in\cat T^c$ (take $I$ the disjoint union of the sets~$M(y)$ where $y$ runs over a skeleton of the essentially small~$\cat T^c$). This coproduct $\coprod_{i}\hat y_i$ is an object~$\hat Y$ for $Y=\coprod_{i}y_i$ the corresponding coproduct of compacts in~$\cat T$. Repeating the argument, $M$ has a presentation $\hat X\oto{\hat f} \hat Y\onto M$ for $X,Y\in\cat T$ coproducts of compacts; indeed any map $\hat X\to \hat Y$ must come from~$\cat T$ by~\eqref{eq:yoneda-hom} and the nature of~$X$. Completing $f$ into a distinguished triangle and using that $\yoneda:\cat T\to \MT$ is homological, we obtain the result.
\end{proof}

\begin{Rem}
\label{pt:coh}
Our abelian category $\cat A=\MT$ of modules is a \emph{locally coherent} (Grothen\-dieck) category. Let us remind the reader about `locally coherent'. First, $\cat A$ has small colimits (coproducts) and filtered colimits are exact. An object $m\in\cat A$ is called \emph{finitely presented} if $\Homcat{A}(m,-)$ commutes with filtered colimits. We write $\cat A\fp$ for the subcategory of finitely presented objects, which is equivalent to~$\mT$. So $\cat A\fp$ is essentially small and every object of~$\cat A$ is a filtered colimit of objects in~$\cat A\fp$. This is called \emph{locally finitely presented} and is equivalent to $\cat A$ being a Grothendieck category with a generating set of finitely presented objects. Finally, the subcategory $\cat A\fp=\mT$ is itself abelian, which makes~$\cat A$ a \emph{locally coherent} abelian category. See details in~\cite[\S\,1]{Krause97} and further references therein.
\end{Rem}

\begin{Rem}
\label{pt:B-loc}%
A Serre subcategory $\cat B\subseteq\cat A$ is \emph{localizing} if it is closed under arbitrary coproducts (or colimits), \ie the Gabriel quotient $\cat A\onto \cat A/\cat B$ admits a right adjoint. Giving a Serre subcategory $\cat B_0$ of $\cat A\fp$ is equivalent to giving the localizing subcategory $\cat B:=\cat B_0^\to$ it generates. Indeed, we can recover~$\cat B_0$ as the finitely presented objects of~$\cat B$, that is, $\cat B_0=\cat B\fp=\cat B\cap \cat A\fp$. The localizing category $\cat B_0^\to$ consists of all colimits of objects of~$\cat B_0$ and can also be described as those $M\in\cat A$ such that every morphism $m\to M$ with $m$ finitely presented factors via some object of~$\cat B_0$.

We shall prove below that $\cat A$ admits a tensor. Since $(\cat A\fp)^\to=\cat A$, it will then be clear that a Serre subcategory~$\cat B_0\subseteq\cat A\fp$ is \emph{$\otimes$-ideal} (\ie $\cat A\fp\otimes\cat B_0\subseteq\cat B_0$) if and only if the induced localizing Serre subcategory $\cat B=\cat B_0^\to$ is \emph{$\otimes$-ideal} in~$\cat A$ (\ie $\cat A\otimes\cat B\subseteq\cat B$).
\end{Rem}

\begin{Rem}
\label{rem:sigma}%
All subcategories of~$\cat A$ that we are considering are tacitly assumed stable under suspension~$\Sigma:\cat A\to \cat A$. See~\ref{pt:sigma}. For instance, we really only consider $\Sigma$-stable Serre subcategories. This condition will be subsumed anyway in the $\otimes$-ideal property since $\Sigma(-)\cong \Sigma(\unit)\otimes-$. We therefore omit it everywhere.
\end{Rem}

\begin{Rem}
\label{pt:hatF}%
The restricted-Yoneda functor $\yoneda:\cat T\to \MT$ is homological and preserves arbitrary coproducts. It is also universal for this property~\cite[Cor.\,2.4]{Krause00} in that any coproduct-preserving homological functor $F:\cat T\to \cat {A}$ to a Grothendieck category factors uniquely via~$\yoneda$:
\[
\xymatrix{
\cat T \ar[r]^-{\yoneda} \ar[rd]_-{F}
& \MT \ar[d]^-{\hat F}
\\
& \cat A
}
\]
through an \emph{exact} functor~$\hat F$ which is colimit-preserving (\ie coproduct preserving).
The extension $\hat F$ is characterized by the following formula, for every~$M\in\MT$
\begin{equation}
\label{eq:hatF}%
\hat F(M)=\colim_{(x\,,\,\hat x\oto{} M)\,\in\,[\cat T^c\oto{\yoneda}M]}F(x)
\end{equation}
where $[\cat T^c\oto{\yoneda} M]$ is the `slice category', sometimes denoted~$(\yoneda\to M)$ or~$(\yoneda\searrow M)$.
In fact, since $\hat F$ is colimit-preserving and since every $M\in\MT$ is the colimit
\[
M=\colim_{(x\,,\,\hat x\to M)\,\in\,[\cat T^c\oto{\yoneda}M]}\hat x
\]
we can simply characterize~$\hat F$ by $\hat F(\hat x)=F(x)$ for all compact~$x\in\cat T^c$.
\end{Rem}

\begin{Exa}
\label{ex:hatT}%
For every object~$X\in \cat T$, we can consider the homological coproduct-preserving functor
\[
\xymatrix{\cat T \ar[r]^-{X\otimes-} & \cat T \ar[r]^-{\yoneda} & \MT
}
\]
and thus obtain an exact colimit-preserving functor~$T^{\mathrm{left}}_X:\MT\to \MT$ such that $T^{\mathrm{left}}_X(\hat Y)=\widehat{X\otimes Y}$. Similarly, we get for each $Y\in \cat T$ an exact colimit-preserving functor $T^{\mathrm{right}}_Y:\MT\to \MT$ characterized by~$T^{\mathrm{right}}_Y(\hat X)=\widehat{X\otimes Y}$.
\end{Exa}

\begin{Rem}
We also have the \emph{Day convolution product} on~$\MT$
\[
\otimes: \MT \times \MT \too \MT
\]
which is only \emph{right exact} and colimit-preserving in each variable and described by the following formula, for every~$M,N\in\MT$
\[
M\otimes N=\colim_{(\hat x\to M)\in [\cat T^c\oto{\yoneda}M]}\quad \colim_{(\hat y\to N)\in [\cat T^c\oto{\yoneda}N]} \quad \widehat{x\otimes y}\,.
\]
We see right away that for $x,y\in\cat T^c$, we have agreement $\hat x\otimes \hat y=\widehat{x\otimes y}=T^{\mathrm{left}}_x(\hat y)=T^{\mathrm{right}}_y(\hat x)$. Taking a suitable colimit in the variable~$x$ (over the slice category of~$M$) in the equation $\hat x\otimes \hat y=T^{\mathrm{right}}_y(\hat x)$ we get that $M\otimes \hat y = T^{\mathrm{right}}_y(M)$ for every $M\in \MT$; in particular for $M=\hat X$
\[
\hat X \otimes \hat y = T^{\mathrm{right}}_y(\hat X) = \widehat{X\otimes y} = T^{\mathrm{left}}_X(\hat y)
\]
for every $X\in\cat T$. Taking now a similar colimit in~$y$ in the latter, we see that the functor $\hat X\otimes -$ coincides with the functor $T^{\mathrm{left}}_X$ of Example~\ref{ex:hatT} and in particular
\[
\hat X\otimes \hat Y\cong \widehat{X\otimes Y}\,.
\]
As $\hat X\otimes-$ coincides with the exact functor $T^{\mathrm{left}}_X$, we have also shown that each $\hat X$ is flat.
Finally the tensor restricts to finitely presented objects, namely:
\[
\cat A\fp \otimes \cat A\fp\subseteq\cat A\fp
\qquadtext{and}
\unit_{\cat A}=\hat\unit_{\cat T}\in\cat A\fp,
\]
as the usual Day convolution product on~$\cat A\fp=\mT$ induced by that on~$\cat T^c$.
\end{Rem}

In summary we have:

\begin{Prop}
\label{prop:tens-MT}%
The category~$\MT$ admits a right-exact tensor $\otimes$ commuting with colimits. Moreover, $\yoneda:\cat T\to \MT$ is monoidal: $\hat X\otimes \hat Y\cong \widehat{X\otimes Y}$ and the objects $\hat X\in \MT$ are flat, for all~$X\in\cat T$ (\ie $\hat X\otimes-$ is exact).
\qed
\end{Prop}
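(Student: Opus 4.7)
The plan is to construct the tensor on $\cat A=\MT$ as a Day convolution, then use the universal property of restricted-Yoneda recalled in~\ref{pt:hatF} to identify $\hat X\otimes-$ with the exact, colimit-preserving functor $T^{\mathrm{left}}_X$ of Example~\ref{ex:hatT}; from this identification the monoidality of $\yoneda$ and the flatness of representables fall out together.

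First, since $\cat T^c$ is essentially small symmetric monoidal and $\cat A$ is its category of additive presheaves, Day convolution (the left Kan extension of $\otimes_{\cat T^c}$ along Yoneda) provides a symmetric monoidal structure on~$\cat A$ with unit $\hat\unit$. Because left Kan extensions into cocomplete targets preserve colimits in each variable separately, the resulting~$\otimes$ is cocontinuous in each variable, hence in particular right exact; moreover it restricts to the Day convolution on $\cat A\fp=\mT$ induced by~$\otimes$ on $\cat T^c$, and yields $\hat x\otimes\hat y\cong\widehat{x\otimes y}$ for $x,y\in\cat T^c$ by construction.

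Second, for each $X\in\cat T$ the composite $\yoneda\circ(X\otimes-):\cat T\to\cat A$ is homological and coproduct-preserving, because $X\otimes-$ is exact and coproduct-preserving on~$\cat T$ and $\yoneda$ is homological and coproduct-preserving. By the universal property~\ref{pt:hatF} it factors uniquely through~$\yoneda$ via an \emph{exact} colimit-preserving endofunctor $T^{\mathrm{left}}_X:\cat A\to\cat A$ with $T^{\mathrm{left}}_X(\hat y)=\widehat{X\otimes y}$ for $y\in\cat T^c$; symmetrically we obtain $T^{\mathrm{right}}_Y$. To compare $\hat X\otimes-$ with $T^{\mathrm{left}}_X$, one first checks $M\otimes\hat y\cong T^{\mathrm{right}}_y(M)$ for every $M\in\cat A$ and compact~$y$: both sides are colimit-preserving functors of~$M$ and they coincide with $\widehat{x\otimes y}$ on representables $M=\hat x$, so writing $M=\colim \hat x$ as in~\ref{pt:hatF} yields the identity. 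Specialising to $M=\hat X$ gives $\hat X\otimes\hat y\cong\widehat{X\otimes y}=T^{\mathrm{left}}_X(\hat y)$. Now $\hat X\otimes-$ and $T^{\mathrm{left}}_X$ are colimit-preserving endofunctors of~$\cat A$ which agree on the generating class $\{\hat y\mid y\in\cat T^c\}$, so they are naturally isomorphic. Applied to $M=\hat Y$ and combined with the colimit presentation of~$\hat Y$, this yields the monoidality formula $\hat X\otimes\hat Y\cong\widehat{X\otimes Y}$; and since $T^{\mathrm{left}}_X$ is exact, $\hat X\otimes-$ is exact, so $\hat X$ is flat.

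The main obstacle is the third step, the identification $\hat X\otimes-\cong T^{\mathrm{left}}_X$: the Day tensor is defined by a coend which is not a priori easy to evaluate on non-representable objects like $\hat X$ for general $X\in\cat T$, while $T^{\mathrm{left}}_X$ is produced from the universal property of restricted-Yoneda and is not \textsl{a priori} a tensor. Bridging the two requires exploiting cocontinuity of both functors together with the fact that the representables $\hat y$ with $y\in\cat T^c$ generate~$\cat A$ under colimits, so that agreement on this class propagates to all objects.
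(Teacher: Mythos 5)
Your proposal is correct and follows essentially the same route as the paper: Day convolution for the tensor, the universal property of restricted-Yoneda to produce the exact colimit-preserving functors $T^{\mathrm{left}}_X$ and $T^{\mathrm{right}}_Y$, and the two-step colimit comparison (first in the first variable against $\hat y$, then in the second) to identify $\hat X\otimes-$ with $T^{\mathrm{left}}_X$, from which monoidality and flatness follow together. No gaps.
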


\begin{Rem}
In fact the category~$\MT$ is moreover closed, i.e.\ it admits an internal hom. This follows from a general argument about Grothendieck categories.
\end{Rem}

%------------------------------------------------------------------------------
\goodbreak
%%%%%\bibliographystyle{alpha}%
%%%%%\bibliography{BKS-biblio}

%------------------------------------------------------------------------------
\end{document}